\newtheorem{theorem}{Theorem}[section]
\newtheorem{proposition}[theorem]{Proposition}
\newtheorem{lemma}[theorem]{Lemma}
\theoremstyle{definition}
\newtheorem{definition}[theorem]{Definition}
\newtheorem{remark}[theorem]{Remark}
\newtheorem{conjecture}[theorem]{Conjecture}
\newenvironment{example}
{\pushQED{\qed}\examplex}
{\popQED\endexamplex}
\newtheorem{question}[theorem]{Question}
\def\Ddots{\mathinner{\mkern1mu\raise\p@
\vbox{\kern7\p@\hbox{.}}\mkern2mu
\raise4\p@\hbox{.}\mkern2mu\raise7\p@\hbox{.}\mkern1mu}}
\title{Positive Polytopes with Few Facets in the Grassmannian}
\author{Dmitrii Pavlov and Kristian Ranestad}
\date{}
\begin{document}
\maketitle

\begin{abstract}
    In this article we study adjoint hypersurfaces of geometric objects obtained by intersecting simple polytopes with few facets in $\mathbb{P}^5$ with the Grassmannian $\mathrm{Gr}(2,4)$. These generalize the positive Grassmannian, which is the intersection of $\mathrm{Gr}(2,4)$ with the simplex. We show that if the resulting object has five facets, it is a positive geometry and the adjoint hypersurface is unique. For the case of six facets we show that the adjoint hypersurface is not necessarily unique and give an upper bound on the dimension of the family of adjoints. We illustrate our results with a range of examples. In particular, we show that even if the adjoint is not unique, a positive hexahedron can still be a positive geometry. 
\end{abstract}

\section{Introduction}

Positive geometries, which are geometric objects introduced in \cite{arkani2017positive} in the context of particle physics, have received substantial attention from the mathematical community in recent years. Loosely speaking, a positive geometry is a pair of an algebraic variety~$X$ and a semialgebraic set~$X_{\geq 0}$ inside the real points of $X$, equipped with a distinguished rational top-form on $X$, called the \emph{canonical form}, that captures the boundary structure of $X_{\geq 0}$ (for a precise definition, see \cite[Definition 1]{Lam:2022yly}). Because of a number of technical conditions imposed on the canonical form, finding examples of positive geometries is a fairly hard task. 

Standard examples of positive geometries include polytopes in the projective space $\mathbb{P}^n$, the positive Grassmannian $\mathrm{Gr}_{\geq 0}(k,n)$ inside $\mathrm{Gr}(k,n)$, positive parts of toric varieties \cite[Section 5]{arkani2017positive}, and the positive part $(\mathcal{M}_{0,n})_{\geq 0}$ of the Deligne-Knudsen-Mumford compactification $\overline{\mathcal{M}_{0,n}}$ of the moduli space of rational curves with $n$ marked points \cite[Proposition 8.2]{arkani2021cluster}. All these objects have one thing in common: a remarkably rich underlying combinatorial structure. In particular, all of them allow for a stratification of their algebraic boundary (e.g. the face stratification for polytopes and the positroid stratification for $\mathrm{Gr}_{\geq 0}(k,n)$ \cite{postnikov}). The fact that their boundaries admit such a stratification allows to determine whether these objects are positive geometries and find their canonical forms by using a combinatorial approach, namely by studying their \emph{residual arrangements}. While we refer the reader to Section \ref{sec:2} for precise definitions, we will now briefly explain the idea of this approach. For a set whose boundary has a well defined stratification consisting of faces contained in the intersection of a finite set of facets, there is a residual arrangement.  It consists of all irreducible components of intersections of (Zariski closures of) facets that are not Zariski closures of a face. 
The canonical form has simple poles along each facet, so the denominator of the rational function defining the canonical form is the product of the defining equations of the facets. The canonical form is only allowed to have poles along the faces, and therefore the numerator of the rational function must cancel the zeros of the denominator located away from the faces of our set. These zeros are precisely in the residual arrangement. Thus, the numerator defines a hypersurface of minimal possible degree interpolating the residual arrangement. Hypersurfaces with this property are called \emph{adjoint}. This classical algebraic geometric term denotes a divisor whose strict transform on the blow up along the residual arrangement restricts to a canonical divisor on the strict transform of the algebraic boundary.
The uniqueness of the canonical form may be achieved by the uniqueness of such a canonical divisor, i.e. by the uniqueness of the adjoint.

Studying adjoint hypersurfaces has proven to be rather helpful in showing that a given object is a positive geometry. An illustration of this is the case of planar polypols, nonlinear analogs of polygons, studied in \cite{kohn2021adjoints}. To introduce an example that motivated the development of the positive geometries machinery, we now take a small detour into physics. 

One of the main problems in theoretical particle physics is that of calculating \emph{scattering amplitudes}. These are quantities encoding probabilities of certain particle interactions in a given quantum field theory. One of the main recent observations in this area is that these quantities can be read off of a mathematical object called \emph{the amplituhedron} \cite{arkani2014amplituhedron}. The tree $(n,k,m)$-amplituhedron, corresponding to tree-level amplitudes in $\mathcal{N}=4$ SYM theory, is a projection of the positive Grassmannian $\mathrm{Gr}_{\geq 0}(k,n)$ to $\mathrm{Gr}(k,k+m)$ under a map induced by an $n\times(k+m)$ matrix with positive maximal minors. One of the main conjectures in \cite{arkani2017positive} is that (tree) amplituhedra are positive geometries. This conjecture was resolved in \cite{ranestad2024adjoints} for the case $k=m=2$ by studying the adjoints of amplituhedra living in $\mathrm{Gr}(2,4)$. 

In this paper, we continue the quest for positive geometries and study adjoints of another family of subsets of $\mathrm{Gr}(2,4)$. These subsets are obtained from the positive Grassmannian $\mathrm{Gr}_{\geq 0}(2,4)$ by adding one or two additional linear inequalities. We call the resulting sets \emph{positive pentahedra} and \emph{hexahedra} in $\mathrm{Gr}(2,4)$ respectively. The setup of this paper is in a certain sense dual to that of amplituhedra: instead of projecting the positive Grassmannian, we intersect it with half-spaces.

We note that our approach heavily relies on computational algebra and that the conclusions we make in this paper are supported by computations in computer algebra systems, which are made available at \cite{mathrepo}.

Our main results are as follows. When $\mathrm{Gr}_{\geq 0}(2,4)$ is cut by one additional hyperplane with some mild additional assumptions, we prove that the adjoint is unique and that positive pentahedra in $\mathrm{Gr}(2,4)$ are positive geometries. For the case of two additional hyperplanes we show that the adjoint is not necessarily unique and that the family of adjoints is at most six-dimensional. We conjecture that this family is in fact at most three-dimensional and show on an example that this bound is attained. 

The article is organized as follows. In Section \ref{sec:2} we introduce the necessary notions. 
In Section \ref{sec:3} we define the objects of our study: positive pentahedra and hexahedra in $\mathrm{Gr}(2,4)$. Section \ref{sec:4} features our main results on adjoints. Section \ref{sec:5} is devoted to examples: we present three positive hexahedra exhibiting different dimensions of the family of adjoints, namely one and three. In all of these examples, we show that the considered hexahedra are positive geometries. Finally, in Section \ref{sec:6} we discuss open questions and directions for future work. 

\section{Preliminaries} \label{sec:2}
In this section we give the preliminaries that are necessary to set up our problem. We start from the notions of residual arrangements and adjoint hypersurfaces of polytopes in the (complex) projective space $\mathbb{P}^n$.  By a polytope $P\subset \mathbb{P}^n $ we mean a convex full-dimensional polytope inside the real points of some affine chart of $\mathbb{P}^n$. We denote the hyperplane arrangement in $\mathbb{P}^n$ obtained by taking Zariski closures of the facets of $P$ by $\mathcal{H}_P$. 

\begin{definition}[Residual arrangement of a polytope] \label{def:ra_pol}
    Let $P\subseteq \mathbb{P}^n$ be a polytope. The collection of all intersections of hyperplanes in $\mathcal{H}_P$ that do not contain a face of $P$ is called the \emph{residual arrangement} of $P$ and is denoted by $\mathcal{R}(P)$. 
\end{definition}

\begin{definition}[Adjoint hypersurface/polynomial of a polytope]
Let $P\subseteq \mathbb{P}^n$ be a polytope with $d$ facet hyperplanes. A hypersurface in $\mathbb{P}^n$ of degree $d-n-1$ containing $\mathcal{R}(P)$ is called an \emph{adjoint hypersurface} of $P$. The defining polynomial of an adjoint hypersurface is called an \emph{adjoint polynomial}.
\end{definition}
Adjoint hypersurfaces are classical objects in algebraic geometry. After blowing up the residual arrangement, their strict transform restricts to canonical divisors on the boundary.
\begin{example} \label{ex:intro}
    For a pentagon in the plane, the residual arrangement consists of five points. These are marked in red in Figure \ref{fig:pentagon}. There is a unique conic interpolating these five points, shown in orange. This is the unique adjoint of the pentagon. 
\end{example}

\begin{figure}
            \centering
            \begin{tikzpicture}[scale=1, line width=0.5mm]
            \draw[black] (-1,0) -- (4,0);
            \draw[black] (0,-1) -- (0,4);
            \draw[black] (-2,3) -- (3,-2);
            \draw[black] (-2,2.5) -- (4,-0.5);
            \draw[black] (-0.5,4) -- (2.5,-2);

            \begin{scope}[shift={(1,1)},rotate=-45]
            \draw[orange] (0,0) ellipse (2.45cm and 1.42cm);
            \end{scope}

            \draw node[red,fill,circle,minimum size=0.2cm,inner sep=0pt] at (0,0) {};
            \draw node[red,fill,circle,minimum size=0.2cm,inner sep=0pt] at (0,3) {};
            \draw node[red,fill,circle,minimum size=0.2cm,inner sep=0pt] at (-1,2) {};
            \draw node[red,fill,circle,minimum size=0.2cm,inner sep=0pt] at (2,-1) {};
            \draw node[red,fill,circle,minimum size=0.2cm,inner sep=0pt] at (0,0) {};
            \draw node[red,fill,circle,minimum size=0.2cm,inner sep=0pt] at (3,0) {};

            \fill[blue, opacity=0.3] (0,1) -- (0,1.5) -- (1,1) -- (1.5,0) -- (1,0) -- cycle;

        \end{tikzpicture}
            \caption{Residual arrangement (red) and adjoint curve (orange) of a pentagon in the plane.}
            \label{fig:pentagon}
\end{figure}

Adjoint hypersurfaces play an important role in the context of positive geometries \cite{arkani2017positive}, where their defining polynomials arise as numerators of canonical forms of polytopes. We now define positive geometries and canonical forms, following \cite{Lam:2022yly}. Let $X$ be a complex $d$-dimensional irreducible algebraic variety defined over $\mathbb{R}$. We equip the real points $X(\mathbb{R})$ with the analytic topology. Let $X_{\geq 0} \subset X(\mathbb{R})$ be a closed semialgebraic subset such that the interior~$X_{>0} = \operatorname{Int}(X_{\geq 0})$ is an oriented $d$-manifold, and the closure of~$X_{>0}$ recovers~$X_{\geq 0}$. Let~$\partial X_{\geq 0}$ denote the boundary~$X_{\geq 0} \setminus X_{>0}$ and let~$\partial X$ denote the \emph{algebraic boundary} of $X$, i.e.\ the Zariski closure of~$\partial X_{\geq 0}$. Let~$C_1, C_2,\ldots, C_r$ be the irreducible components of~$\partial X$. Let~$C_{i,\geq 0}$ denote the closures of the interior of~$C_i \cap X_{\geq 0}$ in $C_i(\mathbb{R})$.
The spaces~$C_{1,\geq 0},\ldots, C_{r,\geq 0}$ are called the boundary components, or \emph{facets} of~$X_{\geq 0}$. In Figure \ref{fig:pentagon} the algebraic boundary of the pentagon consists of the five black lines. The components $C_i$ are the individual lines, and the sets $C_{i,\geq 0}$ are the edges of the pentagon. 

\begin{definition}[Positive geometries] \label{def:posgeo}
    A pair~$(X,X_{\geq 0})$ is a \emph{positive geometry} if there exists a unique nonzero rational $d$-form~$\Omega (X,X_{\geq 0})$ on $X$, called the \emph{canonical form}, satisfying the following recursive axioms:
    \begin{enumerate}
        \item If $d = 0$, then $X = X_{\geq 0} = \mathrm{pt}$ is a point and we define~$\Omega(X,X_{\geq 0}) = \pm 1$ depending on the orientation.
        \item If~$d > 0$, then we require that~$\Omega (X, X_{\geq 0})$ has poles only along the boundary components~$C_i$, these poles are simple, and for each~$i = 1, 2,\ldots, r,$ the pair $(C_i, C_{i,\geq 0})$ is a positive geometry with canonical form 
        $$\Omega(C_i, C_{i,\geq 0}):=\operatorname{Res}_{C_i} \Omega(X, X_{\geq 0}).$$ Here $\mathrm{Res}_{C_i}\Omega$ denotes the (Poincar\'e) residue of $\Omega$ at $C_i$. 
    \end{enumerate}
\end{definition}

Polytopes in $\mathbb{P}^n$ are known to be positive geometries \cite[Section 5]{arkani2017positive}, and the canonical form of a polytope is given by $f(\mathbf{x})\mu_{\mathbb{P}^n}(\mathbf{x})$, where $f(\mathbf{x})$ is a rational function, $\mu_{\mathbb{P}^n}(\mathbf{x}) = \sum_{i=0}^n x_i dx_0\wedge\ldots\wedge \widehat{d x_i} \wedge\ldots\wedge dx_n$ is the standard rational top-form on $\mathbb{P}^n$ and $\mathbf{x} = (x_0:\ldots:x_n)$ is the vector of homogeneous coordinates. The numerator of $f$ is then given by the adjoint polynomial, and the denominator is the product of linear forms defining the facets \cite[Theorem 5]{Lam:2022yly}.

The uniqueness of the adjoint in Example \ref{ex:intro} is no accident. For simple polytopes in $\mathbb{P}^n$ (in the sense that the facet hyperplane arrangement of the polytope is simple) it is known \cite[Theorem 1]{kohn2020projective} that the adjoint hypersurface is unique.  

The notion of the residual arrangement (and therefore of adjoint hypersurfaces) relies exclusively on the fact that the notion of a face of a polytope is well-defined. It can therefore be generalized to nonlinear semialgebraic sets with well-behaved boundary. A notable example here is given by \emph{polypols}. Their adjoints were extensively studied in \cite{kohn2021adjoints}.

Another example of a semialgebraic set with a well-defined face structure is that of the \emph{positive Grassmannian} \cite{postnikov}. It is in fact a regular CW-complex (\cite[Theorem 3.5]{postnikov} and \cite[Theorem 1.1]{galashin2022regularity}) whose cells are indexed by positroids and a number of other combinatorial objects. We will now define this object and review its boundary stratification. 

The real Grassmannian $\mathrm{Gr}(k,n)$ is the algebraic variety parametrizing $k$-dimensional linear subspaces of $\mathbb{R}^n$ (or equivalently, $(k-1)$-dimensional subspaces of $\mathbb{P}^{n-1}$). The Grassmannian $\mathrm{Gr}(k,n)$ can be embedded into the projective space $\mathbb{P}^{\binom{n}{k}-1}$ via the Pl\"ucker embedding. This is done by sending a $k\times n$ matrix whose rows span a vector space to the vector of its maximal minors, called the \emph{Pl\"ucker coordinates}. See e.g. \cite[Chapter 5]{michalek2021invitation} for an introductory reference. 

\begin{definition}[Positive Grassmannian]
   The \emph{positive (or totally nonnegative) Grassmannian} $\mathrm{Gr}_{\geq 0}(k,n)$ is a subset of $\mathrm{Gr}(k,n)$ consisting of the points all of whose nonzero Pl\"ucker coordinates have the same sign.  
\end{definition}
The boundary strata of $\mathrm{Gr}_{\geq 0}(k,n)$ are called (non-top-dimensional) \emph{positroid cells}. They are defined by intersecting $\mathrm{Gr}_{\geq 0}(k,n)$ with linear spaces given by the vanishing of some of the Pl\"ucker coordinates. The unique top-dimensional positroid cell, defined by requiring all Pl\"ucker coordinates to be nonzero, is the interior of $\mathrm{Gr}_{\geq 0}(k,n)$. Zariski closures of positroid cells are called \emph{positroid varieties}. 

It is the stratification into positroid cells that defines the structure of a regular CW-complex on $\mathrm{Gr}_{\geq 0}(k,n)$. In this context, positroid cells are analogs of faces of a polytope. This allows one to define the residual arrangement of $\mathrm{Gr}_{\geq 0}(k,n)$ as the collection of irreducible components of intersections of positroid hypersurfaces in $\mathrm{Gr}(k,n)$ that are not contained in $\mathrm{Gr}_{\geq 0}(k,n)$ itself. This definition is unfortunately not particularly meaningful: it is known that $\mathrm{Gr}_{\geq 0}(k,n)$ is a \emph{simplex-like positive geometry} \cite[Section 5.5.2]{arkani2017positive}, that is, its residual arrangement is empty (in analogy with that of the simplex in $\mathbb{P}^n$). This similarity between the positive Grassmannian and simplices stems from the fact that $\mathrm{Gr}_{\geq 0}(k,n)$ is the intersection of the whole Grassmannian $\mathrm{Gr}(k,n)$ with the positive simplex $\mathbb{P}^{\binom{n}{k}-1}_{\geq 0}$. However, when $\mathrm{Gr}(k,n)$ is intersected with more complicated polytopes in $\mathbb{P}^{\binom{n}{k}-1}_{\geq 0}$, the resulting object may have non-empty residual arrangement. This paper studies a special case of this situation. Natural questions to ask are whether there is a unique adjoint hypersurface, and if not, what the dimension of the family of adjoint polynomials is. 

Our focus is on $\mathrm{Gr}_{\geq 0}(2,4) \subset \mathbb{P}^5$. Its positroid stratification is nicely visualized in \cite[Appendix A]{williams2005combinatorial}, and the facet hyperplanes are $p_{12} = 0$, $p_{14}=0$, $p_{23}=0$, $p_{34} = 0$. We will cut the positive simplex $\mathbb{P}^5_{\geq 0}$ by one or two additional hyperplanes, therefore adding one or two facets, and then study the intersection of the resulting polytope with $\mathrm{Gr}(2,4)$. This interection is naturally a subset of $\mathrm{Gr}_{\geq 0}(2,4)$. The precise setup is explained in Section \ref{sec:3}.

\section{Positive polytopes in $\mathrm{Gr}(2,4)$} \label{sec:3}

We adopt the notation $\mathbf{p}=(p_{12},p_{13}, p_{23}, p_{14}, p_{24}, p_{34})$ . The positive simplex $\mathbb{P}^5_{\geq 0}$ in $\mathbb{P}^5$ is defined by the inequalities $p_{ij}\geq 0$ for $1\leq i < j \leq 4$. Note that each of these equalities does not make sense individually, since the Pl\"ucker coordinates are only defined up to a common scalar. What we mean by these six linear inequalities is that the quadratic inequalities $p_{ij}p_{kl}\geq 0$ hold for any pair of Pl\"ucker coordinates. In a similar way, in what follows, whenever we define a set by linear inequalities $l_i(\mathbf{p})\geq 0$ in the projective space, we will mean the set defined by the corresponding quadratic inequalities obtained by multiplying pairs of linear forms $l_i(\mathbf{p})$ in all possible ways. 

Let $P \subset \mathbb{P}^5$ be a polytope. \emph{Facets} of the semialgebraic set $P\cap \mathrm{Gr}(2,4)$ are understood in the sense of the discussion before Definition \ref{def:posgeo} and lower-dimensional faces are defined recursively. 
The positive simplex $\mathbb{P}^5_{\geq 0}$ has six facets, given by the vanishing of the Pl\"ucker coordinates. However, only four of those yield facets of the positive Grassmannian $\mathrm{Gr}_{\geq 0}(2,4)$: the facets of the simplex defined by $p_{13} = 0$ and $p_{24} = 0$ intersect the positive Grassmannian in a locus of codimension two. This motivates the following definition. 

\begin{definition}[Positive polytopes]
    Let $P\subset \mathbb{P}^5$ be a simple polytope with $m+6$ facets such that six of the facets are contained in the Pl\"ucker hyperplanes and the remaining facets are in the hyperplanes defined by the vanishing of the linear forms $h_i(\mathbf{p}) = 0$, $i=1,\ldots,m$. Assuming that the semialgebraic set $S = P\cap \mathrm{Gr}(2,4)$ has $m+4$ facets that are given by the four positroid hyperplanes $p_{12}=0$, $p_{14}=0$, $p_{23}=0$, $p_{34}=0$ and the $m$ additional hyperplanes $h_i(\mathbf{p}) = 0$, we call $S$ a \emph{positive polytope} in $\mathrm{Gr}(2,4)$ and $P$ the \emph{associated polytope} of $S$. 
\end{definition}
Any positive polytope is contained in the positive Grassmannian, which itself a positive polytope. Note that this definition can be straightforwardly generalized to arbitrary Grassmannians $\mathrm{Gr}(k,n)$.

\begin{remark}
    The positive Grassmannian is homeomorphic to a ball \cite{galashin2022totally}. In particular, it is orientable. Intersecting it with half-spaces preserves orientability, so positive polytopes are orientable as well.
\end{remark}

The focus of this paper is on positive pentahedra and hexahedra, i.e. on positive polytopes that are defined by one or two additional hyperplanes $h_i(\mathbf{p})=0$.

\begin{definition}[Residual arrangement of a positive polytope]
    Let $S$ be a positive polytope in $\mathrm{Gr}(2,4)$. Just like in the case of polytopes in $\mathbb{P}^5$, the \emph{residual arrangement} of $S$ consists of those irreducible components of intersections of Zariski closures of facets of $S$ that do not contain a face of $S$. 
\end{definition}



\subsection{Combinatorial classification of positive hexahedra} \label{sec:31}
Intersecting a polytope with the Grassmannian yields interesting combinatorics. In this subsection we illustrate how the situation becomes more complicated compared to the case of projective polytopes on the example of positive hexahedra. 

The associated polytope of a positive hexahedron in $\mathrm{Gr}(2,4)$ is a simple five-dimensional polytope with eight facets. There are eight combinatorial types of such polytopes (they are labeled by Gale diagrams of their polar duals in \cite[Figure 6.3.3]{grunbaum1967convex}). The combinatorial type of the associated polytope determines the number of linear spaces of each dimension in its residual arrangement, as well as the incidence relations between these spaces. However, when intersected with $\mathrm{Gr}(2,4)$, two combinatorially equivalent associated polytopes can produce positive hexahedra with meaningfully different residual arrangements (see Example \ref{ex:combcomp}). For this reason, we need a finer notion of combinatorial type for positive hexahedra that would capture the presence of $\mathrm{Gr}(2,4)$. We now introduce this. 

Given a polytope in $\mathbb{P}^5$ with eight facets, one can choose coordinates on $\mathbb{P}^5$ such that any given ordered subset of six facets is given by the vanishing of the Pl\"ucker coordinates. Since the Grassmannian is given by the quadratic equation $p_{12}p_{34}-p_{13}p_{24}+p_{14}p_{23} = 0$, to determine the residual arrangement of a positive hexahedron $S$, one needs to know the combinatorial type of the associated polytope $P$ and which intersections of two facet hyperplanes are given by the vanishings of the Pl\"ucker coordinates $p_{12}=p_{34}=0$, $p_{13}=p_{24}=0$ and $p_{14}=p_{23}=0$. We now elaborate on this. Suppose an octahedron $P$ in $\mathbb{P}^5$ is defined by the facet inequalities $h_i(\mathbf{p})\geq0,\ i=1,\ldots 8$. Let $I\subseteq [8]$. To determine whether the intersection of facet hyperplanes $\{h_i(\mathbf{p})=0\ :\ i\in I\}$ is in the residual arrangement $\mathcal{R}(P)$, one needs to determine whether the system of equations and inequalities $h_i(\mathbf{p})=0, \ i\in I$ and $h_j(\mathbf{p})\geq 0, \ j\not\in I$ has a solution. Intersecting the polytope $P$ with a quadratic hypersurface $q(\mathbf{p}):=h_{i_1}(\mathbf{p})h_{i_2}(\mathbf{p})-h_{i_3}(\mathbf{p})h_{i_4}(\mathbf{p})+h_{i_5}(\mathbf{p})h_{i_6}(\mathbf{p})=0$ yields a semialgebraic set $S$. To determine whether the intersection of facets of this set $\{q(\mathbf{p})=0,\ h_i(\mathbf{p})=0\ :\ i\in I\}$ is in the residual arrangement $\mathcal{R}(S)$, one now needs to know which of the $h_i$'s participate in the equation for $q$. Note that permuting $i_{1}$ and $i_2$ (and analogously $i_3$ and $i_4$ as well as $i_5$ and $i_{6}$) does not change the equation of $q$. Moreover, simultaneously exchanging $i_1$ and $i_5$ and $i_2$ and $i_6$ does not change this equation. In our setup, the quadric $q$ is the Pl\"ucker quadric, and we need to choose which of the $h_i$'s define its three monomials. 

We say that two positive hexahedra $S_1$ and $S_2$ with associated polytopes $P_1$ and $P_2$ defined by additional hyperplanes $h_1(\mathbf{p}), h_2(\mathbf{p})$ and $h_1'(\mathbf{p}), h_2'(\mathbf{p})$ respectively are \emph{combinatorially equivalent} if $P_2$ can be obtained from $P_1$ by replacing $h_1$ and $h_2$ with $h_1'$ and $h_2'$ (the order is not important) and permuting the Pl\"ucker coordinates in a way that preserves the Pl\"ucker quadric $p_{12}p_{34}-p_{13}p_{24}+p_{14}p_{23}$.

By the definition above, a choice of combinatorial type of $S$ is a choice of combinatorial type of $P$ along with the choice of three intersections of hyperplanes defined by the equations $p_{12}=p_{34}=0$, $p_{13}=p_{24}=0$ and $p_{14}=p_{23}=0$, in which we identify the choices that exchange the spaces $p_{12}=p_{34}=0$ and $p_{14}=p_{23}=0$. There are $420$ ways to choose three intersections of two out of eight facet hyperplanes, which come in pairs whose elements get identified as above. For each combinatorial type of the associated polytope this yields $210$ combinatorial types of positive hexahedra in $\mathrm{Gr}(2,4)$. The total number of combinatorial types of positive hexaherda is therefore $8\cdot 210 = 1680$.

\begin{example}\label{ex:combcomp}
    Consider the positive hexahedron $S$ defined by the additional hyperplanes 
   \begin{gather*}
         h_1(\mathbf{p}) = p_{12} - 3p_{13} - p_{23} + 8p_{14} + 2p_{24}+2p_{34},\\
    h_2(\mathbf{p}) = 2p_{12} + p_{13} - 5p_{23}+p_{14}-3p_{24}+p_{34}
    \end{gather*}
and the positive hexahedron $S'$ with additional hyperplanes 
\begin{gather*}
     h_1(\mathbf{p}) = p_{12} - 3p_{13} + p_{23} - 8p_{14} + 2p_{24}+2p_{34} ,\\
    h_2(\mathbf{p}) = 2p_{12} + p_{13} - 5p_{23}+p_{14}+3p_{24}-p_{34}.
\end{gather*}
The associated polytopes of $S$ and $S'$ are combinatorially equivalent: the residual arrangement for both of them consists of a $\mathbb{P}^3$ and two planes. The residual arrangements of $S$ and $S'$, however, are different. The residual arrangement of $S$ consists of two planes, four lines and two points, while that of $S'$ consists of eight lines, a conic and a point. We will investigate the residual arrangement of $S$ in more detail in Example \ref{ex:schub}.
\end{example}

\section{Adjoints} \label{sec:4}
In this section we investigate adjoint hypersurfaces of positive pentahedra and hexahedra in $\mathrm{Gr}(2,4)$. We start with a definition. 

\begin{definition}[Adjoints of positive polytopes in $\mathrm{Gr}(2,4)$]
    Let $S$ be a positive polytope in $\mathrm{Gr}(2,4)$ with $m+4$ facets. A hypersurface $A$ of degree $m$ in $\mathbb{P}^5$ interpolating the residual arrangement of $S$ such that $A\cap \mathrm{Gr}(2,4)$ has codimension one in $\mathrm{Gr}(2,4)$ is called \emph{adjoint}.
\end{definition}

The choice of degree in the definition above is explained as follows. Recall that the canonical bundle of $\mathbb{P}^n$ is $\mathcal{O}(-n-1)$ and that of $\mathrm{Gr}(2,4)$ is $\mathcal{O}(-4)$. Thus, any global rational differential top-form on $\mathbb{P}^n$ has degree $-n-1$, and any such form on $\mathrm{Gr}(2,4)$ has degree $-4$. For a polytope in $\mathbb{P}^n$ or a positive pentahedron/hexahedron in $\mathrm{Gr}(2,4)$ such a form is given by the ratio of the adjoint polynomial and the product of the facet equations. This means that for a polytope with $8$ facets in $\mathbb{P}^5$ the adjoint hypersurface is quadratic, and for a polytope with $7$ facets it is linear, i.e.\ a hyperplane. Analogously, a positive pentahedron in $\mathrm{Gr}(2,4)$ has a linear adjoint, and a positive hexahedron has a quadratic one.

\subsection{Pentahedra}
We now consider the case of positive pentahedra. Let $S$ be a positive pentahedron in $\mathrm{Gr}(2,4)$ and $P$ its associated polytope in $\mathbb{P}^5$ with $h(\mathbf{p}) = 0$ being the additional (non-coordinate) facet of $P$. Suppose 
$$h(\mathbf{p}) = \sum\limits_{I \in \binom{[4]}{2}} c_Ip_I,$$
where $c_I$ are real numbers. Note that $P$ is simple if and only if $h(\mathbf{p})$ does not vanish at the coordinate points in $\mathbb{P}^5$, that is, if $c_I\neq 0$ for all $I$.

\begin{theorem} \label{thm:penta}
    Suppose $c_I\neq 0$ for all $I$, and suppose the hyperplane $H$ defined by $h(\mathbf{p}) = 0$ intersects every facet of $\mathrm{Gr}_{\geq 0}(2,4)$. Then there exists a unique adjoint of the positive pentahedron $S = P \cap \mathrm{Gr}(2,4)$, and $S$ is a positive geometry. The numerator of its canonical form (and the defining polynomial of the adjoint) is $\sum \overline{c_I}p_I$, where $\overline{c_I} = \max (c_I, 0)$.
\end{theorem}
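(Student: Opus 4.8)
The plan is to reduce the statement to the well-understood theory of adjoints of simple polytopes in $\mathbb{P}^5$, developed in \cite{kohn2020projective, Lam:2022yly}. First I would observe that since $P$ is a simple polytope with seven facets in $\mathbb{P}^5$, it has a unique adjoint hyperplane $A_P$, and its canonical form is $\Omega(P) = (\ell_P(\mathbf{p})/\prod_I p_I \cdot h(\mathbf{p}))\,\mu_{\mathbb{P}^5}(\mathbf{p})$ where $\ell_P$ is the linear adjoint polynomial of $P$. The hypothesis that $H$ meets every facet of $\mathrm{Gr}_{\geq 0}(2,4)$ is what guarantees that $S = P\cap\mathrm{Gr}(2,4)$ genuinely has five facets (the four positroid hyperplanes together with $H$), so $S$ is a positive pentahedron and its adjoint must be a hyperplane. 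The key geometric step is then to compare the residual arrangement $\mathcal{R}(S)$ with $\mathcal{R}(P)$: since $\mathrm{Gr}_{\geq 0}(2,4)$ is simplex-like (empty residual arrangement, as recalled in Section~\ref{sec:2}), the only new components that can appear in $\mathcal{R}(S)$ come from intersecting $H$ with the coordinate hyperplanes, and these are exactly the components of $\mathcal{R}(P)$ that are ``seen'' by $\mathrm{Gr}(2,4)$. I would argue that a hyperplane contains $\mathcal{R}(S)$ if and only if it contains the relevant part of $\mathcal{R}(P)$, and that $A_P$ is the unique such hyperplane; the point here is that the $p_{13}=0$ and $p_{24}=0$ hyperplanes of the simplex, which are not facets of $S$, do not impose constraints, but the structure of $\mathrm{Gr}(2,4)$ does not create additional constraints either because the Plücker quadric meets the coordinate flats transversally enough in the relevant strata. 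This forces the adjoint of $S$ to be unique and equal to (the restriction of) the adjoint of $P$.

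Next I would identify the adjoint polynomial explicitly. For a simple polytope with $d$ facets in $\mathbb{P}^n$ there is a classical formula (via the ``canonical form = sum over vertices'' expansion, or equivalently via the Warren/dual-volume description) for the adjoint; in the special case of a polytope obtained by slicing the simplex $\mathbb{P}^5_{\geq 0}$ with one extra hyperplane $h(\mathbf{p}) = \sum c_I p_I$, I expect the adjoint to be $\sum_I \overline{c_I}\,p_I$ with $\overline{c_I} = \max(c_I,0)$. The cleanest way to see this is to note that $P$ is combinatorially (indeed projectively, after a coordinate change) a prism-like polytope: the vertices of $P$ are the coordinate points $e_I$ with $c_I>0$ together with the points where $H$ meets the edges of the simplex joining an $e_I$ with $c_I>0$ to an $e_J$ with $c_J<0$. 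Computing the canonical form as the sum of the simple-pole contributions at these vertices, the residue along $H$ must reproduce the canonical form of the polytope $P\cap H$, and matching poles pins the numerator down to $\sum_I\overline{c_I}p_I$ up to scalar; one checks the scalar by looking at the residue along any single coordinate facet $p_I=0$ with $c_I>0$, where it collapses to the canonical form of a simplex. This is a finite, essentially combinatorial verification.

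Finally, to conclude that $S$ is a positive geometry I would set $\Omega(S) := \bigl(\sum_I \overline{c_I}\,p_I\bigr)\big/\bigl(p_{12}p_{14}p_{23}p_{34}\cdot h(\mathbf{p})\bigr)$ times the canonical top-form of $\mathrm{Gr}(2,4)$, and verify the recursive axioms of Definition~\ref{def:posgeo}. The poles are simple and lie only along the five facet divisors by construction (the numerator vanishes on the residual arrangement, killing any spurious poles of $\mathrm{Gr}(2,4)\cap\{\text{coordinate hyperplanes}\}$ that are not facets). For each of the five facets I need the Poincaré residue to be the canonical form of that facet: for the four positroid facets $p_{ij}=0$, the facet is again a positive polytope (in fact a positive pentahedron or lower), and for the facet $H\cap\mathrm{Gr}(2,4)$ the residue should be the canonical form of $P\cap H\cap\mathrm{Gr}(2,4)$. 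Uniqueness of $\Omega(S)$ follows from uniqueness of the adjoint together with the standard fact that a nonzero rational top-form on $\mathrm{Gr}(2,4)$ with at most simple poles along the five facet divisors and no other poles is determined up to scalar by its numerator's degree. The main obstacle I anticipate is the residue computation along $H\cap\mathrm{Gr}(2,4)$: one must check that this intersection is itself a positive geometry and that the residue of the proposed form matches its canonical form, which requires understanding how the Plücker quadric restricts to the hyperplane $H$ and identifying the facet structure of $P\cap H\cap\mathrm{Gr}(2,4)$; handling this cleanly, possibly by an inductive/degenerative argument or by an explicit coordinate computation as in \cite{mathrepo}, is the crux of the proof.
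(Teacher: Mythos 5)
The central gap is in your uniqueness step. You deduce uniqueness of the adjoint of $S$ from uniqueness of the adjoint of the associated polytope $P$, but the logic runs the wrong way: $\mathcal{R}(S)$ is built from only the five facet hypersurfaces of $S$ (the four positroid hyperplane sections and $H\cap \mathrm{Gr}(2,4)$), whereas $\mathcal{R}(P)$ uses all seven facet hyperplanes of $P$, including $p_{13}=0$ and $p_{24}=0$. Interpolating $\mathcal{R}(S)$ is therefore an a priori weaker (certainly different) set of linear conditions than interpolating $\mathcal{R}(P)$, so the uniqueness of the adjoint of $P$ gives at best a candidate adjoint for $S$, never its uniqueness; the remark that the Pl\"ucker quadric meets the coordinate flats ``transversally enough'' is not an argument. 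What must actually be done --- and is the heart of the paper's proof --- is a direct description of $\mathcal{R}(S)$. It has two kinds of components: those not involving $H$ at all, namely the vertices and edges of $\mathrm{Gr}_{\geq 0}(2,4)$ that $H$ cuts off (these force $b_I=0$ exactly when $c_I<0$, i.e.\ the truncation $\overline{c_I}=\max(c_I,0)$), and components of the form $H\cap(\text{positroid strata})$, in particular the points where $H$ meets the edges joining the retained vertices, which force $b_I$ proportional to $c_I$ for $c_I>0$. Your description (``the only new components \ldots come from intersecting $H$ with the coordinate hyperplanes'') omits the first kind, which is precisely what pins down the formula; and Example~\ref{ex:schub} shows that ``adjoint of $S$ equals the restriction of the adjoint of the associated polytope'' is not a general principle (it fails already for hexahedra), so even the existence direction requires an argument specific to this configuration rather than an appeal to the polytope theory (it does hold here, but for reasons that amount to redoing the analysis of $\mathcal{R}(S)$ above).

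The second gap is the positive-geometry claim itself: you explicitly leave the residue along the facet $H\cap\mathrm{Gr}(2,4)$ (and, implicitly, the verification along the lower-dimensional faces) as ``the crux,'' so axiom (2) of Definition~\ref{def:posgeo} is not established by the proposal. The paper closes this without a from-scratch recursion on the new facet: it observes that the linear conditions cutting out the adjoint coincide with the residue conditions ($\pm1$) at the vertices $v_I\in S$, so the form with numerator $\sum\overline{c_I}p_I$ and denominator $p_{12}p_{23}p_{34}p_{14}\,h(\mathbf{p})$ has the required iterated residues, and uniqueness of the adjoint then yields the canonical form. Your identification of $\sum\overline{c_I}p_I$ as the adjoint of $P$ is fine, but as written both essential conclusions of Theorem~\ref{thm:penta} --- uniqueness of the adjoint of $S$ and the positive-geometry property --- are left unproved.
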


\begin{proof}
    Let $S_1$ and $S_2$ be the pieces into which $\mathrm{Gr}_{\geq 0}(2,4)$ is cut by $H$. In the notation above, we have $S_1 = S$ and $S_2 = (\mathrm{Gr}_{\geq 0}(2,4) \setminus S) \cup (\mathrm{Gr}_{\geq 0}(2,4) \cap H)$. We will now describe the residual arrangements $\mathcal{R}_1$ and $\mathcal{R}_2$ of $S_1$ and $S_2$, respectively. By the first assumption, each facet of $\mathrm{Gr}_{\geq 0}(2,4)$ contributes a facet to both $S_1$ and $S_2$. The residual arrangement $\mathcal{R}_i$ consists of two parts. The first one (we denote it $\mathcal{R}^G_i$) consists of the intersections of facet hyperplanes of $\mathrm{Gr}_{\geq 0}(2,4)$ that do not give a face of $S_i$. The second one (we denote in $\mathcal{R}^H_i$) consists of the intersections of $H$ with some of the facet hyperplanes of $\mathrm{Gr}_{\geq 0}(2,4)$ that do not give a face of $S_i$. Since $c_I\neq 0$ for all $I$ by assumption, $H$ does not contain any vertex of $\mathrm{Gr}_{\geq 0}(2,4)$. This means that each vertex of $\mathrm{Gr}_{\geq 0}(2,4)$ is either in $\mathcal{R}^G_{1}$ or in $\mathcal{R}^G_2$. More precisely, $v_I \in \mathcal{R}^G_{1}$ if $c_I <0$ and $v_I \in \mathcal{R}^G_2$ if $c_I > 0$. Note that if for two vertices $v$ and $w$ connected by an edge of $\mathrm{Gr}_{\geq 0}(2,4)$ we have $v \in \mathcal{R}^G_{1}$ and $w \in \mathcal{R}^G_2$, then the edge of $\mathrm{Gr}_{\geq 0}(2,4)$ between them contributes an edge to both $S_1$ and $S_2$, and is in none of the residual arrangements. This means that the span of $\mathcal{R}^G_i$ is equal to the span of the vertices of $\mathrm{Gr}_{\geq 0}(k,n)$ that are in $\mathcal{R}^G_i$. 

We now turn to characterizing $\mathcal{R}^H_i$. If $F$ is a face of $\mathrm{Gr}_{\geq 0}(2,4)$ that contributes a face to both $S_1$ and $S_2$, then it has to intersect $H$ inside $\mathrm{Gr}_{\geq 0}(2,4)$, and therefore $F\cap H$ does not contribute to either of the residual arrangements. If $F$ only contributes a face to $S_1$, then it is in $\mathcal{R}^G_2$ and the hyperplane spanned by $F$ intersects $H$ outside of $\mathrm{Gr}_{\geq 0}(2,4)$. This intersection is therefore in $\mathcal{R}^H_1$. If $F$ only contributes a face to $S_2$, then it already is in $\mathcal{R}^G_1$ and only contributes a lower-dimensional piece to $\mathcal{R}^H_1$. Since each face $F$ of $\mathrm{Gr}_{\geq 0}(2,4)$ contributes a face to etiher $S_1$ or $S_2$, and since all the irreducible components of $\mathcal{R}_i^H$ are by definition Zariski closures of the sets of the form $F \cap H$ for some face $F$ of $\mathrm{Gr}_{\geq 0}(2,4)$, one sees that $\mathrm{span}(\mathcal{R}^H_1) = \mathrm{span}(H\cap \mathcal{R}^G_2)$ and, symmetrically, $\mathrm{span}(\mathcal{R}^H_2) = \mathrm{span}(H\cap \mathcal{R}^G_1)$. Since $\mathcal{R}_i = \mathcal{R}_i^G \cup \mathcal{R}_i^H$, we now can describe the span of $\mathcal{R}_i$, which is a projective space.

Since the span of $\mathcal{R}_2^G$ is equal to the span of its vertices, the span of $H \cap \mathcal{R}_2^G$ is equal to the span of the intersection points $w_j$ of the edges between these vertices with $H$.
Any hyperplane interpolating $\mathcal{R}_1^G$ has an equation of the form 
$$\sum\limits_{I \in \binom{[n]}{k}} b_I p_I,$$
where $b_I =0$ if $v_I \in \mathcal{R}_1$. The fact that the adjoint hyperplane should also interpolate the points $w_j$ imposes the conditions $b_I = Ac_I$ if $v_I\in S_1$ for the adjoint, where $A$ is some scalar factor. This is because each point $w_j$ is of the form $\alpha v_I + \beta v_J$ for the vertices $v_I$ and $v_J$ in $\mathcal{R}_G^2$. In addition, for any hyperplane interpolating $w_j$ we have $0=\alpha b_I+ \beta b_J$, and, since $H$ contains $w_j$, we also have $0=\alpha c_I+ \beta c_J$ for all $I$ and $J$ such that $v_I, v_J \in \mathcal{R}_G^2$. This determines the equation of the adjoint $a(\mathbf{p})$ uniquely up to a scalar factor. This scalar factor is then fixed by computing the residue of the resulting canonical form at any vertex of $S_1$ and requiring it to be $\pm 1$ depending on the chosen orientation.

The conditions $b_I=c_I$ for $v_I\in S_1$ can alternatively be obtained by imposing the residue conditions for the canonical form at the vertices $v_I \in S_1$. This shows that the form 
$$
\Omega(\mathrm{Gr}(2,4), S_1) = \dfrac{a(\mathbf{p}) dp_{12}\wedge dp_{23} \wedge dp_{34} \wedge dp_{14} }{p_{12}p_{23}p_{34}p_{14} h(\mathbf{p})}.
$$
is the canonical form of the positive pentahedron $S_1$ and that $S_1$ is a positive geometry. Analogously one can show that $S_2$ is also a positive geometry.

\end{proof}


\begin{remark}
    Theorem \ref{thm:penta} generalizes straightforwardly to arbitrary Grassmannians $\mathrm{Gr}(k,n)$. We are, however, unaware how restrictive the condition that $H$ intersects all the facets of $\mathrm{Gr}_{\geq 0}(k,n)$ is for higher $k$ and $n$. For $\mathrm{Gr}(2,4)$ this is a requirement that is not difficult to fulfill, as shown in the next example. 
\end{remark}

\begin{example}
    We fix the order of Pl\"ucker variables to be $p_{12},p_{13},p_{23},p_{14},p_{24},p_{34}$. Consider the hyperplane defined by the linear form
    $$h(\mathbf{p}) = p_{12} - 3p_{13} + p_{23} - 8p_{14} + 2p_{24} + 2p_{34}.$$
We consider the positive pentahedron $S$ in $\mathrm{Gr}(2,4)$ defined by $p_{ij}\geq 0$ and $h(\mathbf{p})\geq 0$. Its facets are given by the four facets of $\mathrm{Gr}_{\geq 0}(2,4)$, namely $\{p_{12} = 0\}$, $\{p_{14} = 0\}$, $\{p_{23} = 0\}$ and $\{p_{34} = 0\}$, together with $\{h(\mathbf{p}) = 0\}$. The hyperplane defined by $h(\mathbf{p})$ intersects all four facets of $\mathrm{Gr}_{\geq 0}(2,4)$. In the notation of Theorem \ref{thm:penta}, we have that $\mathcal{R}^G$ consists of the two vertices $(0:1:0:0:0:0)$ and $(0:0:0:1:0:0)$ and the line containing them. Any hyperplane containing this line has an equation of the form $b_{12}p_{12}+b_{23}p_{23}+b_{24}p_{24}+b_{34}p_{34}=0$. Now, the remaining four coordinate points are connected by $6$ lines in $\mathbb{P}^5$ but only five of these lines are entirely in $\mathrm{Gr}(2,4)$ (the line through $(1:0:0:0:0:0)$ and $(0:0:0:0:0:1)$ only intersects the Grassmannian in these two points). The five lines intersect the hyperplane defined by $h(\mathbf{p})$ in five points. These points are in the residual arrangement of $S$, and the unique hyperplane containing these five points and $\mathcal{R}^G$ is defined by the equation $p_{12} + p_{23} + 2p_{24} + 2p_{34} = 0$. This is precisely the adjoint hyperplane of $S$.  
\end{example}

\subsection{Hexahedra}
Consider two hyperplanes in the Pl\"ucker space $\mathbb{P}^5$ given by the equations $h_1(\mathbf{p}) = 0$ and $h_2(\mathbf{p}) = 0$. We are interested in the region $S \subseteq \mathrm{Gr}_{\geq 0}(2,4)$ defined by the inequalities $p_{ij} \geq 0$ for any Pl\"ucker coordinate $p_{ij}$ and $h_1(\mathbf{p})\geq 0$, $h_2(\mathbf{p})\geq 0$ and a single equation $p_{12}p_{34} - p_{13}p_{24}+p_{14}p_{23}=0$ defining $\mathrm{Gr}(2,4)$. We assume that the associated polytope $\mathcal{P}_S$ has eight facets, and that $S$ has six facets. We denote the residual arrangement of $S$ by $\mathcal{R}(S)$ and the residual arrangement of $\mathcal{P}_S$ by $\mathcal{R}(\mathcal{P}_S)$.

The residual arrangements of two combinatorially equivalent positive hexahedra have the same number of components of each dimension and degree, and the same incidence relations between them. This means that for generic choices of $h_1(\mathbf{p})$ and $h_2(\mathbf{p})$ the dimension of the family of adjoints is an invariant of the combinatorial type of $S$. Since the number of combinatorial types of positive hexahedra is finite, one could in principle explicitly compute the number of adjoints for each type. However, because the number of combinatorial types is large and computing the ideal of adjoints is relatively complicated, we refrain from fully performing this computation and instead present partial results. We start by relating the residual arrangements of $S$ and $\mathcal{P}_S$.


\begin{proposition} \label{prop:posplanes}
    Let $V$ be the variety in $\mathbb{P}^5$ defined by the vanishing of the Pl\"ucker monomials: $p_{12}p_{34} = p_{13}p_{24} = p_{14}p_{23} = 0$. Assume that the facets of $S$ are in $\{h_1=0\}$, $\{h_2=0\}$ and in the positroid hyperplanes. Then $V \cap \mathcal{R}(S) = V \cap \mathcal{R}(\mathcal{P}_S)$. 
\end{proposition}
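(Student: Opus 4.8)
The plan is to localise the claim to each irreducible component of $V$. The variety $V$ is the union of the eight coordinate $2$-planes $W\subset\mathbb{P}^5$ obtained by choosing one element from each Plücker pair $\{p_{12},p_{34}\}$, $\{p_{13},p_{24}\}$, $\{p_{14},p_{23}\}$ and setting it to zero. On every such $W$ all three Plücker monomials vanish identically, so $W\subset\mathrm{Gr}(2,4)$, and $W\cap\mathrm{Gr}_{\geq0}(2,4)$ is a triangular positroid face of $\mathrm{Gr}_{\geq0}(2,4)$; moreover each of the non-positroid coordinate hyperplanes $\{p_{13}=0\}$ and $\{p_{24}=0\}$ meets $V$ in a union of four of these planes. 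Since both residual arrangements are Zariski closed and $V\cap\mathcal R(\bullet)=\bigcup_W\bigl(W\cap\mathcal R(\bullet)\bigr)$, it suffices to prove $W\cap\mathcal R(S)=W\cap\mathcal R(\mathcal P_S)$ for each of the eight planes $W$.

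Fix one such $W$ and restrict all facet data to $W\cong\mathbb P^2$. The eight facet hyperplanes of $\mathcal P_S$ restrict to three hyperplanes equal to all of $W$ (those indexed by the zero coordinates of $W$) together with five lines: $\{p_f=0\}\cap W$ for the three free Plücker coordinates $f$, and $\{h_1=0\}\cap W$, $\{h_2=0\}\cap W$. The six facet hypersurfaces of $S$ restrict to two hyperplanes equal to $W$ and the \emph{same} five lines except that one is missing, namely $\lambda_0:=\{p_{cd}=0\}\cap W$, where $cd$ is the unique element of $\{13,24\}$ that is a free coordinate of $W$. The structural point I would use is that $\lambda_0$ is the Zariski closure of an edge of $\mathrm{Gr}_{\geq0}(2,4)$: it is spanned by two vertices which do not form a Plücker pair. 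Hence $\lambda_0\cap S$, when one-dimensional, is a face of $S$, and (away from the degenerate loci where $h_1$ or $h_2$ vanishes on $\lambda_0$) the line $\lambda_0$ is not itself an irreducible component of an intersection of facet hypersurfaces of $S$, since it equals $W\cap W'$ for the second component $W'$ of the relevant pair of positroid divisors. So $\lambda_0$ does not enlarge $\mathcal R(S)$, and the content of the proposition on $W$ is that it also does not enlarge $\mathcal R(\mathcal P_S)$.

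On both sides the residual strata meeting $W$ are governed by the polygon $G:=S\cap W=\mathcal P_S\cap W$: a linear piece $L$ is a residual component precisely when $L$ contains no face of the relevant set, which inside $W$ reduces to $L\cap W$ being disjoint from $G$ — with the caveat that on the $S$-side the Plücker quadric can split some intersections of facet divisors into quadric surfaces $Q_{IJ}$ or into planes $W_j$, and one must check separately that these carry a face of $S$ whenever they meet $S$. This makes the shared lines and the plane $W$ itself agree on the two sides, so the remaining work is (i) the absent line $\lambda_0$ in the case $\lambda_0\cap G=\emptyset$, and (ii) the intersection points of $\lambda_0$ with the other lines, which are exactly the coordinate points $v_J$ (vertices of $\mathrm{Gr}_{\geq0}(2,4)$) lying in $W$. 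For a coordinate point one checks directly that $v_J\in\mathcal R(\mathcal P_S)\iff v_J\notin\mathcal P_S$, so what must be proved is $v_J\in\mathcal R(S)\iff v_J\notin\mathcal P_S$; the hard direction is that $v_J\notin\mathcal P_S$ forces some edge of $\mathrm{Gr}_{\geq0}(2,4)$ through $v_J$ which is a component of an intersection of facet hypersurfaces of $S$ to be disjoint from $S$, and it is precisely here that the hypothesis that $S$ has exactly the six prescribed facets (so that a vertex can only be removed from $\mathcal P_S$ "through an edge") is used. I expect this vertex analysis — together with the verification that the higher-dimensional pieces $W_j$, $Q_{IJ}$ and $D_I$ produced by the Plücker quadric never become residual unless they are genuinely disjoint from $S$ — to be the main obstacle, since the face posets of $S$ and $\mathcal P_S$ are not isomorphic (faces of $S$ may be conics or quadric surfaces), so the equivalences above are not formal and have to be extracted from the combinatorial hypotheses on $S$ and $\mathcal P_S$.
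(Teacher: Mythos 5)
Your proposal is a reduction plan, not a proof: the statements you yourself flag as ``the main obstacle'' --- that for a coordinate point $v_J\notin\mathcal P_S$ some edge of $\mathrm{Gr}_{\geq0}(2,4)$ through $v_J$ is a facet-intersection component of $S$ disjoint from $S$, and that the planes and quadric surfaces into which the Pl\"ucker quadric splits intersections of facet divisors carry a face of $S$ whenever they meet $S$ --- are exactly the content of the proposition, and you never establish them. There are also concrete inaccuracies in the setup. First, $W\cap\mathcal R(S)$ is not governed by the line arrangement obtained by restricting the facets to $W$: components of $\mathcal R(S)$ need not lie in $V$ at all (in the paper's examples they include conics and a quadric surface), and such a component can meet $W$ in points that your ``linear pieces $L$ inside $W$'' bookkeeping never sees; the same issue arises for three-dimensional components of $\mathcal R(\mathcal P_S)$ meeting $W$ in curves. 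Second, the intersection points of the missing line $\lambda_0=\{p_{cd}=0\}\cap W$ with the other restricted facet lines are not only the coordinate points: $\lambda_0$ also meets $\{h_1=0\}\cap W$ and $\{h_2=0\}\cap W$ in points that are generically not coordinate points, and these must be compared in the two residual arrangements as well. So even granting your deferred claims, the case analysis as organized does not cover $V\cap\mathcal R(S)=V\cap\mathcal R(\mathcal P_S)$.

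The paper's proof avoids all of this with one observation you did not use: every point of $V$ lies on at least two of the four \emph{positroid} hyperplanes (on each plane of $V$ exactly one of the three vanishing coordinates is $p_{13}$ or $p_{24}$), and these are facet hyperplanes of both $S$ and $\mathcal P_S$; moreover $V\subset\mathrm{Gr}(2,4)$. Hence for a component $X$ of $\mathcal R(\mathcal P_S)$, each component of $X\cap V$ sits inside an intersection of facet hypersurfaces of $S$, so it either contains a face of $S$ or lies in $\mathcal R(S)$; since $X$ is disjoint from the convex polytope $\mathcal P_S$ and $S=\mathcal P_S\cap\mathrm{Gr}(2,4)$, the first alternative is impossible. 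The converse inclusion is symmetric, using that every facet hyperplane of $S$ is one of $\mathcal P_S$ and that $Y\cap V\not\subset S$ forces $Y\cap V\not\subset\mathcal P_S$. If you want to salvage your component-by-component approach, you would have to prove your deferred dichotomies for $S$; the shorter route is to import the two-positroid-hyperplane observation, after which no analysis of $\lambda_0$, vertices, or the restricted arrangement in $W$ is needed.
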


\begin{proof}
    The variety $V$ consists of the $8$ positroid planes in $\mathrm{Gr}(2,4)$, that is, of the linear two-dimensional components of the algebraic boundary of $\mathrm{Gr}_{\geq 0}(2,4)$. In particular, each point of $V$ is contained in at least two of the four positroid hyperplanes $\{p_{12}=0\}$, $\{p_{14}=0\}$, $\{p_{23}=0\}$, $\{p_{34}=0\}$. Let $X$ be a component of $\mathcal{R}(\mathcal{P}_S)$. Then, due to the observation above and because $X\cap V \subseteq V \subseteq \mathrm{Gr}(2,4)$, each component of $X\cap V$ is contained in the intersection of Zariski closures of at at least two facets of $S$. Thus, since each component of $X\cap V$ is equal to the intersection of a subset of the facet hyperplanes of $\mathcal{P}_S$, $X\cap V$ is either in $S$ or in $\mathcal{R}(S)$. Since, due to convexity, $X$ is disjoint from $\mathcal{P}_S$ and $S = \mathcal{P}_S\cap \mathrm{Gr}(2,4)$, we conclude that $X\cap V$ is disjoint from $S$ and is therefore in $\mathcal{R}(S)$. Thus, $V \cap \mathcal{R}(\mathcal{P}_S)\subseteq V\cap\mathcal{R}(S)$.  

    Conversely, let $Y$ be a component of $\mathcal{R}(S)$. Then $Y\cap V$ is contained in the intersection of facet hyperplanes of $S$. Each facet hyperplane of $S$ is also a facet hyperplane of $\mathcal{P}_S$, so $Y\cap V$ is either in $\mathcal{P}_S$ or in $\mathcal{R}(\mathcal{P}_S)$. Since $S = \mathcal{P}_S\cap \mathrm{Gr}(2,4)$ and $Y\cap V \subseteq V \subseteq \mathrm{Gr}(2,4)$, and $Y\cap V \not \subset S$, we conclude $Y\cap V \not\subset \mathcal{P}_S$ and thus $Y\cap V \subseteq \mathcal{R}(\mathcal{P}_S)$. Thus, $V\cap\mathcal{R}(\mathcal{S})\subseteq V\cap \mathcal{R}(\mathcal{P}_S)$.
\end{proof}

\begin{proposition} \label{prop:sixquadrics}
    For generic choices of the hyperplanes $h_1(\mathbf{p})$ and $h_2(\mathbf{p})$ there are at most six linearly independent adjoints of $S$ modulo the Pl\"ucker quadric defining $\mathrm{Gr}(2,4)$.
\end{proposition}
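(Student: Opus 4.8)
The plan is to count the linear conditions imposed on a quadric in $\mathbb{P}^5$ by the requirement of passing through the residual arrangement $\mathcal{R}(S)$, and to show that these conditions cut down the $21$-dimensional space of quadrics to a subspace of dimension at most $7$ once we mod out by the Plücker quadric. Equivalently, I want to show that $\mathcal{R}(S)$ imposes at least $13$ independent conditions on quadrics (note $\dim \mathbb{P}(\text{quadrics}) = 20$, and after quotienting by the one-dimensional span of the Plücker quadric we are looking at a $19$-dimensional projective space of ``genuine'' adjoint candidates, so ``at most six linearly independent adjoints modulo the Plücker quadric'' means the space of adjoint polynomials has dimension at most $7$, i.e. $\mathcal{R}(S)$ imposes at least $14$ conditions on the $21$-dimensional vector space of quadrics). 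So the real task is a lower bound on the number of independent linear conditions.

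First I would use Proposition \ref{prop:posplanes} to split $\mathcal{R}(S)$ into the part lying on the positroid variety $V$ (the eight positroid planes) and the part lying off $V$. On $V$, the residual arrangement of $S$ coincides with that of the associated polytope $\mathcal{P}_S$, and the structure of $\mathcal{R}(\mathcal{P}_S)$ for a simple $5$-polytope with $8$ facets is completely understood: it is a union of linear subspaces whose dimensions and incidences are determined by the (finitely many) combinatorial types of $\mathcal{P}_S$ listed in \cite[Figure 6.3.3]{grunbaum1967convex}. For each such type one knows exactly how many conditions the residual arrangement of a projective $8$-facet polytope imposes on quadrics — indeed, by \cite{kohn2020projective} this arrangement imposes exactly enough conditions to make the adjoint of $\mathcal{P}_S$ unique, i.e. it cuts the $21$-dimensional space of quadrics down to dimension exactly $1$. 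Intersecting with $V$ can only lose conditions, so I would carefully bound how many conditions survive after intersecting the components of $\mathcal{R}(\mathcal{P}_S)$ with $V$; the Plücker quadric itself accounts for one lost condition (it now passes through all of $\mathcal{R}(\mathcal{P}_S)\cap V$), and I would argue that at most a bounded number more are lost, leaving at least $14$ independent conditions overall.

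The key intermediate step is therefore a combinatorial/geometric bookkeeping argument: go through the components of $\mathcal{R}(\mathcal{P}_S)$, determine which survive in $\mathcal{R}(S)$ (either because they lie in $V$, via Proposition \ref{prop:posplanes}, or because they persist as components of $\mathcal{R}(S)$ off $V$), count the conditions each imposes on quadrics in $\mathbb{P}^5$ (a linear subspace $L^k$ imposes $\binom{k+2}{2}$ conditions, a point imposes $1$, a line imposes $3$, a plane imposes $6$, a $\mathbb{P}^3$ imposes $10$, generically independent), correct for overlaps and incidences using inclusion–exclusion, and subtract the single condition coming from the Plücker quadric. I would do this uniformly by first establishing that $\mathcal{R}(\mathcal{P}_S)$ imposes exactly $20$ independent conditions (the ``uniqueness of the polytope adjoint'' fact), then showing that passing to $S$ and quotienting by the Plücker quadric removes at most $6$ of them. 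The cleanest way to get the bound $6$ uniformly across all $1680$ combinatorial types is probably to note that the only components of $\mathcal{R}(\mathcal{P}_S)$ that can ``leave'' the residual arrangement of $S$ or become redundant modulo the Plücker quadric are those forced by the quadric's monomial structure (the three pairs $p_{12}=p_{34}=0$, etc.), and to bound the total condition-count carried by those.

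The main obstacle will be making the overlap/inclusion–exclusion bookkeeping both uniform and rigorous: the components of $\mathcal{R}(S)$ off $V$ are genuinely nonlinear in general (e.g. the conic appearing in Example \ref{ex:combcomp}), and the incidences among the surviving pieces vary with the combinatorial type. I expect the safest route is to prove the weaker statement that the span of $\mathcal{R}(S)$ together with $\mathrm{Gr}(2,4)$ fills up enough of $\mathbb{P}^5$ that at most $6$ directions in the space of quadrics vanish on all of $\mathcal{R}(S)$ and are independent modulo the Plücker quadric — and to verify this by the explicit computation referenced at \cite{mathrepo}, running over representatives of the finitely many combinatorial types, rather than by a purely human case analysis.
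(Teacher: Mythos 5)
Your proposal follows essentially the same route as the paper: by Proposition~\ref{prop:posplanes} every adjoint of $S$ must vanish on $V\cap\mathcal{R}(\mathcal{P}_S)$, and the bound then comes from a computer verification (for generic $h_1,h_2$) that at most seven linearly independent quadrics contain this locus, one of which is the Pl\"ucker quadric, leaving at most six modulo it. The intermediate hand-count you sketch (that intersecting $\mathcal{R}(\mathcal{P}_S)$ with $V$ loses at most six of the twenty conditions coming from uniqueness of the polytope adjoint) is not justified and is not attempted in the paper either, but since you ultimately fall back on the explicit computation at \cite{mathrepo}, your argument coincides with the paper's proof.
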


\begin{proof}
    By Proposition \ref{prop:posplanes}, the number of quadrics in $I(\mathcal{R}(S))$ is bounded from above by the number of quadrics in $I(V \cap \mathcal{R}(\mathcal{P}_S)) = \sqrt{I(V)+I(\mathcal{R}(\mathcal{P}_S))}$. We claim that as long as $h_1$ and $h_2$ are generic, the ideal $\sqrt{I(V)+I(\mathcal{R}(\mathcal{P}_S))}$ has at most seven linearly independent quadrics. The proof of this latter statement is a direct computation. The code is available at \cite{mathrepo}. Since one of these quadrics is necessarily the defining polynomial of $\mathrm{Gr}(2,4)$, the family of adjoints is at most six-dimensional. 
\end{proof}

The bound from Proposition \ref{prop:sixquadrics} is likely not tight. In all the examples we computed this family was at most three-dimensional (see Example \ref{ex:manyadj}).

\begin{conjecture}
For a generic positive hexahedron $S$ there are at most three linearly independent adjoint polynomials modulo the Pl\"ucker quadric defining $\mathrm{Gr}(2,4)$.
\end{conjecture}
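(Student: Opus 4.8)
\noindent The plan is to strengthen Proposition~\ref{prop:sixquadrics} from $6$ to $3$ by bringing in the components of $\mathcal{R}(S)$ that do \emph{not} lie on the positroid-plane variety $V$, and then to finish by a finite computation over the combinatorial types of Section~\ref{sec:31}. Recall that the proof of Proposition~\ref{prop:sixquadrics} realizes the space $I(\mathcal{R}(S))_2$ of quadrics through $\mathcal{R}(S)$ as a subspace of the quadrics in $\sqrt{I(V)+I(\mathcal{R}(\mathcal{P}_S))}=I(V\cap\mathcal{R}(\mathcal{P}_S))$: restriction to $V$ maps the latter onto the quadrics on $V$ vanishing on $V\cap\mathcal{R}(\mathcal{P}_S)$, with kernel the $3$-dimensional space $\langle p_{12}p_{34},\,p_{13}p_{24},\,p_{14}p_{23}\rangle$ of quadrics through $V$ (which contains the Pl\"ucker quadric), and the computation at \cite{mathrepo} shows this target has dimension at most $4$, so $\dim I(\mathcal{R}(S))_2\le 7$. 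To reach $\dim I(\mathcal{R}(S))_2\le 4$ --- i.e.\ at most three adjoints modulo the Pl\"ucker quadric --- it suffices to show that for every combinatorial type the components of $\mathcal{R}(S)$ lying outside $V$ impose at least three further independent conditions on quadrics of $\mathbb{P}^5$. So the first step is to list, type by type, the irreducible components of $\mathcal{R}(S)$ not contained in $V$. By the discussion in Section~\ref{sec:31}, such a component is an irreducible component of $\mathrm{Gr}(2,4)\cap\bigcap_{i\in I}\overline{F_i}$ for a set $I$ of facets of $S$ containing at least one of $\{h_1=0\}$, $\{h_2=0\}$; since $\mathrm{Gr}(2,4)$ is a smooth quadric fourfold, these are quadric surfaces (spanning a $\mathbb{P}^3$) or conics (spanning a $\mathbb{P}^2$), possibly including the quadric surface $\mathrm{Gr}(2,4)\cap\{h_1=h_2=0\}$ when the two new facets of $S$ share no face. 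Requiring a quadric of $\mathbb{P}^5$ to vanish on such a component forces its restriction to the linear span of the component to be a scalar multiple of the component's own equation, which is many linear conditions; the point is to show that, across all types, these conditions overlap with the positroid-plane conditions in such a way that the dimension always drops to at most $4$, with equality attained in the type of Example~\ref{ex:manyadj}.

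Two structural inputs make this bookkeeping manageable. First, since the associated polytope $\mathcal{P}_S$ is simple it has a unique adjoint quadric $A_{\mathcal{P}_S}$ by \cite[Theorem~1]{kohn2020projective}; combined with Proposition~\ref{prop:posplanes} this identifies the candidate space before the non-$V$ conditions are imposed as a subspace of $\langle A_{\mathcal{P}_S},\,p_{12}p_{34},\,p_{13}p_{24},\,p_{14}p_{23}\rangle$, so one only has to intersect this $4$-dimensional space with the vanishing conditions along the non-$V$ components. Second, Theorem~\ref{thm:penta} together with subdivision of canonical forms produces an explicit element of $I(\mathcal{R}(S))_2$: cutting $\mathrm{Gr}_{\geq 0}(2,4)$ first by $\{h_1=0\}$ and then by $\{h_2=0\}$ gives the relation $A_S-A_R=a_1h_2$ (and symmetrically $A_S-A_{R'}=a_2h_1$) between the canonical numerator $A_S$ of $S$, that of the complementary hexahedron, and the pentahedron adjoint $a_1=\sum\overline{c^{(1)}_I}p_I$ of $\{h_1\geq 0\}\cap\mathrm{Gr}_{\geq 0}(2,4)$; moreover it shows that the positroid part of $\mathcal{R}(S)$ is the positroid part of the residual arrangement of $\{h_1\geq 0\}\cap\mathrm{Gr}_{\geq 0}(2,4)$ enlarged by the spans of the faces of that pentahedron cut off by $\{h_2=0\}$. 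This lets one read $\mathcal{R}(S)$ off the combinatorial data uniformly instead of recomputing it for each type, and the symmetries of the Pl\"ucker quadric together with the eight Gale diagrams of \cite[Figure~6.3.3]{grunbaum1967convex} collapse the $1680$ types to a far shorter list of distinct residual-arrangement patterns, which is what one actually has to treat.

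The main obstacle is making this case analysis rigorous rather than experimental. One must (a) prove that $\dim I(\mathcal{R}(S))_2$ is constant on a Zariski-dense subset of the parameter space of each combinatorial type, so that a single generic representative per type suffices --- this should follow from upper semicontinuity of $\dim I(\mathcal{R}(S))_2$ in the coefficients of $h_1,h_2$ (equivalently, of the degree-$2$ value of the Hilbert function of the saturated ideal of $\mathcal{R}(S)$) together with the fact that the number, degrees and incidences of the components of $\mathcal{R}(S)$ are constant on a dense open subset of each type by definition of the combinatorial type; and (b) carry out, for each representative, the Gr\"obner-basis computation of the degree-$2$ part of the saturated ideal of $\mathcal{R}(S)$, extending the code at \cite{mathrepo}. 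The genuinely hard point --- and what a conceptual, computation-free proof would require --- is to pin down exactly which non-$V$ residual components occur for each type and to establish the uniform lower bound of three on the number of independent quadric conditions they contribute; I do not see a way to do this that avoids a structured enumeration.
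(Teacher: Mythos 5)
This statement is a conjecture in the paper: the authors prove only the bound of six (Proposition~\ref{prop:sixquadrics}) and support the bound of three by examples, so there is no proof to match yours against; what you offer is a programme, and as you yourself concede at the end, its decisive step --- a type-by-type enumeration showing that the non-$V$ components of $\mathcal{R}(S)$ always impose at least three further independent conditions on quadrics --- is not carried out. That enumeration (or a conceptual substitute for it) \emph{is} the content of the conjecture, so the proposal does not close the gap the paper leaves open.

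Beyond incompleteness, both of your ``structural inputs'' are problematic. The claim that the quadrics through $V\cap\mathcal{R}(\mathcal{P}_S)$ (``the candidate space before the non-$V$ conditions are imposed'') lie in the four-dimensional space $\langle A_{\mathcal{P}_S},\,p_{12}p_{34},\,p_{13}p_{24},\,p_{14}p_{23}\rangle$ is false, and the paper's own Example~\ref{ex:manyadj} refutes it: there the space of quadrics through $\mathcal{R}(S)$ (hence through $V\cap\mathcal{R}(\mathcal{P}_S)$) is four-dimensional, and its image modulo $\langle p_{12}p_{34},p_{13}p_{24},p_{14}p_{23}\rangle$ is three-dimensional (e.g.\ $4p_{23}p_{24}+p_{14}p_{24}$, $4p_{13}p_{14}-4p_{23}p_{24}+p_{14}p_{24}$, $12p_{13}p_{23}+12p_{23}p_{24}+p_{14}p_{24}$ are independent), whereas any subspace of $\langle A_{\mathcal{P}_S},\text{monomials}\rangle$ has image of dimension at most one; in general the space of quadrics through $V\cap\mathcal{R}(\mathcal{P}_S)$ can be as large as seven-dimensional, and vanishing on the small subscheme $V\cap\mathcal{R}(\mathcal{P}_S)$ is a much weaker condition than vanishing on all of $\mathcal{R}(\mathcal{P}_S)$, so uniqueness of $A_{\mathcal{P}_S}$ gives you nothing here (compare also Example~\ref{ex:schub}, where the adjoint of $S$ is not the restriction of the adjoint of $\mathcal{P}$). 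The second input, the subdivision identity $A_S-A_R=a_1h_2$, presupposes that positive hexahedra are positive geometries whose canonical forms have adjoint numerators and that these forms add under the subdivision --- precisely the open question of Section~\ref{sec:6} --- so it cannot be used as an unconditional tool to read off $\mathcal{R}(S)$ or to produce elements of $I(\mathcal{R}(S))_2$. The genericity-and-semicontinuity reduction in your point (a) is sound and is in the spirit of the paper's own remarks, but with the first input gone you are back to bounding a subspace of an (up to) seven-dimensional space, and the uniform ``three extra conditions'' bound remains unproved.
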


\section{Examples} \label{sec:5}
In this section we present examples of positive hexahedra $S$ in $\mathrm{Gr}(2,4)$ exhibiting different dimensions of the family of adjoints. In all of these examples there is however a unique canonical form with the required residues at the boundary vertices. We obtained these examples by using computer algebra systems. We identified the residual arrangements using the \texttt{Mathematica} implementation of cylindrical algebraic decomposition (see e.g. \cite{arnon1984cylindrical}), and the equations of adjoints were obtained in \texttt{Macaulay2} \cite{M2}. We also verified that all positive hexahedra in our examples are connected and contractible, using the \texttt{Julia} package \texttt{HypersurfaceRegions.jl} presented in \cite{breiding2024computing}. The details on our code are available at \cite{mathrepo}. 

To analyze the possible canonical forms we consider one- and two-dimensional faces $F$ of the positive hexahedron $S$. They are semialgebraic sets with a boundary consisting of lower dimensional faces. We write $\overline{F}$ for the Zariski closure of $F$ and consider the arrangement ${\mathcal H}_F$ in $\overline{F}$ of Zariski closures of codimension one boundary faces in $F$. The residual arrangement $\mathcal{R}(F)$ is the collection of intersections in $\overline{F}$ of codimension at least two of Zariski closures of these boundary faces. Let ${\mathcal O}(K_F)$ be the canonical bundle on $\overline{F}$.  Similar to adjoints of $S$ in $\mathrm{Gr}(2,4)$, an adjoint hypersurface or adjoint divisor of $F$ contains the residual arrangement  $\mathcal{R}(F)$ and is the zero locus of a global section of ${\mathcal O}(K_F+{\mathcal H}_F)$. 

The intersection $\mathcal{R}(S)\cap \overline{F}$ may contain a divisor $D_F\subset \overline{F}$. The complement $(\mathcal{R}(S)\cap \overline{F})\setminus D_F$ is always contained in the residual arrangement $\mathcal{R}(F)$ of the face $F$. 
This containment may be strict.
When two disjoint faces $F_1$ and $F_2$ have the same Zariski closure, i.e. $\overline{F_1}=\overline{F_2}$, then the residual arrangement of one face may contain boundary vertices of the other, while these vertices are not contained in the residual arrangement of $S$. We will show this phenomenon in Example \ref{ex:manyadj}. 
We note that there may be adjoints to $F$ on $\overline{F}$ that are not restrictions to $\overline{F}$ of adjoints to $S$. This will also be shown in Example \ref{ex:manyadj}. 

 We argue that each example we give of a positive hexahedron is a positive geometry, in the sense that it has a unique canonical form. In some cases, the positive hexahedron  has a unique adjoint, while in other examples the adjoint is not unique, while the required $\pm 1$ residue at each boundary vertex is satisfied by a unique adjoint. 

 We use the following lemmas.
 \begin{lemma}\label{residueinvertices}
     If a positive hexahedron $S$, defined by the simple arrangement of hyperplane sections
     $\{p_{12}\cdot p_{23}\cdot p_{34}\cdot p_{14}\cdot h_1\cdot h_2=0\},$ 
     has a unique adjoint $a_S$ such that the form 
     $$\Omega_S=\frac{a_S}{p_{12}\cdot p_{23}\cdot p_{34}\cdot p_{14}\cdot h_1\cdot h_2}d\omega_S,$$ where $\omega_S$ is a volume form, 
     has iterated residues $\pm 1$ at the boundary vertices of every $1$-dimensional boundary face, then  $(\mathrm{Gr}(2,4), S)$  is a positive geometry with canonical form $\Omega_S$.   
 \end{lemma}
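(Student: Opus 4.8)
The plan is to verify the two recursive axioms of Definition \ref{def:posgeo} directly for the pair $(\mathrm{Gr}(2,4), S)$, using the candidate form $\Omega_S$, and to establish uniqueness separately. Since $\dim \mathrm{Gr}(2,4) = 4 > 0$, we must check that $\Omega_S$ has only simple poles along the facet hypersurfaces, and that for each facet $C_i$ the residue $\mathrm{Res}_{C_i}\Omega_S$ is the canonical form of the pair $(C_i, C_{i,\geq 0})$, where $C_{i,\geq 0}$ is the corresponding boundary component of $S$. The simple-pole condition is built into the ansatz: the denominator $p_{12}p_{23}p_{34}p_{14}h_1 h_2$ is a product of distinct irreducible linear forms cutting out the six facet hyperplanes, and by hypothesis $a_S$ is a genuine adjoint, so $\Omega_S$ has no poles off $\partial S$ — this is exactly the role of the adjoint interpolating $\mathcal{R}(S)$. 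The main work is therefore the recursion on the facets.

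First I would set up the recursion one level down. Each facet $C_{i,\geq 0}$ is itself a positive polytope-like region inside a three-dimensional variety (either a positroid variety $C_i = \{p_{ij}=0\}\cap\mathrm{Gr}(2,4)$, or $C_i = \{h_k = 0\}\cap\mathrm{Gr}(2,4)$), cut out by the traces of the remaining facet hyperplanes. The Poincaré residue $\mathrm{Res}_{C_i}\Omega_S$ is a rational $3$-form on $C_i$ whose denominator is the product of the traces of the other five facet equations and whose numerator is the restriction of $a_S$ to $C_i$; one checks that this restriction interpolates the residual arrangement $\mathcal{R}(C_{i,\geq 0})$ of the facet, so $\mathrm{Res}_{C_i}\Omega_S$ is an adjoint form for $(C_i, C_{i,\geq 0})$ of the right degree, hence has at worst simple poles along the boundary of $C_{i,\geq 0}$ and none in the interior. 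Iterating, each face of each dimension inherits such an adjoint form, and the recursion bottoms out at $0$-dimensional faces — the boundary vertices of the $1$-dimensional faces. There the iterated residue is by hypothesis $\pm 1$, which is precisely axiom (1). So the content of the hypothesis ``iterated residues $\pm 1$ at the boundary vertices of every $1$-dimensional boundary face'' is exactly what makes the base case of the recursion work, and the adjoint property of $a_S$ propagates the ``poles only on the boundary, simple'' condition through every intermediate stage.

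The subtle point, and the step I expect to be the main obstacle, is the orientation/normalization bookkeeping and the fact flagged in the discussion preceding the lemma: when two disjoint faces $F_1, F_2$ share a Zariski closure $\overline{F_1} = \overline{F_2}$, the residual arrangement of $\overline{F_i}$ can contain boundary vertices of the \emph{other} face, and an adjoint of the face need not be a restriction of an adjoint of $S$. One must argue that, nonetheless, the residue form $\mathrm{Res}_{C_i}\Omega_S$ restricted to the actual semialgebraic piece $C_{i,\geq 0}$ still has the correct poles: extra zeros of the restricted numerator coming from the ``other'' face's vertices are harmless (they only kill spurious poles), and the only thing that could fail is if the restricted numerator vanished identically on some $\overline{F_i}$ or acquired a pole inside $C_{i,\geq 0}$ — both excluded because $a_S$ is an adjoint of $S$ and $\overline{F_i}\subset\mathrm{Gr}(2,4)$ is not contained in the adjoint hypersurface (its intersection has codimension one). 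One then verifies that at each $1$-dimensional face the residue form is $\frac{(\text{const})}{\ell_1\ell_2}$ in a local coordinate with two boundary points, whose residues at the two endpoints are $\pm 1$ by hypothesis; by the same local computation as in the proof of Theorem \ref{thm:penta} this forces the form on the face to be its canonical form, and the recursion closes. Finally, uniqueness of $\Omega_S$: any canonical form has the same denominator (simple poles exactly along the six facets) and a numerator of degree $m = 2$ modulo the Plücker quadric that must interpolate $\mathcal{R}(S)$, i.e.\ is an adjoint; the hypothesis that $a_S$ is the \emph{unique} adjoint, together with the residue normalization fixing the scalar, pins down $\Omega_S$ uniquely, so $(\mathrm{Gr}(2,4),S)$ is a positive geometry with canonical form $\Omega_S$.
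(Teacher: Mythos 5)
Your overall strategy---verifying the recursive axioms of Definition \ref{def:posgeo} directly for $\Omega_S$---is broadly the same as the paper's, but two of your key steps are genuine gaps. First, you assert that the restriction of $a_S$ to each facet (and, iterating, to each face) interpolates that face's residual arrangement, so that each iterated residue is an adjoint form of the corresponding face. This is exactly the statement the paper warns against in the discussion preceding the lemma: the residual arrangement of a face need not be contained in $\mathcal{R}(S)$ (when two disjoint faces share a Zariski closure, $\mathcal{R}(F)$ of one face can contain boundary vertices of the other, which are faces of $S$ and hence \emph{not} in $\mathcal{R}(S)$), and adjoints of a face need not be restrictions of adjoints of $S$. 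Example \ref{ex:manyadj} exhibits precisely this: the restriction $a_{S,F}$ decomposes as $\lambda p_{23}p_{34}+a'_{S,F_2}$, and only $a'_{S,F_2}$, not $a_{S,F}$ itself, is adjoint to the face $F_2$. Your remark that ``extra zeros of the restricted numerator are harmless'' addresses the wrong direction---the danger is missing zeros, not extra ones---and uniqueness of $a_S$ does not repair this. The paper's proof never needs such a restriction statement: it uses only that the arrangement is simple (so all residues have simple poles along the boundary, with the adjoint killing the poles along $\mathcal{R}(S)$) and the prescribed $\pm 1$ residues at the vertices.

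Second, and more seriously, you never establish that the intermediate faces are positive geometries, i.e.\ that the residue of $\Omega_S$ along each facet and each lower face is the \emph{unique} canonical form of that face pair. Definition \ref{def:posgeo} demands uniqueness at every level of the recursion, not only at the top (which you obtain from the unique adjoint plus normalization) and at the bottom (the $\pm 1$ vertex residues). This intermediate uniqueness is the actual content of the paper's proof: if two forms on a face had the same simple poles and the same iterated residues at all boundary vertices, their difference would have vanishing iterated residues at the vertices, hence---inductively, dimension by dimension---vanishing residues along all boundary components, hence no poles at all, hence would be zero; so the face form is unique, it coincides with the iterated residue of $\Omega_S$, and the recursion closes. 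Without this inductive vanishing argument (or some substitute for it), your plan shows at best that $\Omega_S$ has the expected poles and residues, not that $(\mathrm{Gr}(2,4),S)$ is a positive geometry with canonical form $\Omega_S$.
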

 \begin{proof} The form $\Omega_S$ has simple poles along the algebraic boundary of $S$, so its residue along each codimension one boundary component $F$ is a form $\Omega_F$ with iterated residues $\pm 1$ at its boundary vertices. Since the hyperplane arrangement is simple, the form $\Omega_F$ has simple poles along its boundary. The adjoint $a_S$ is unique, so the lemma follows if the form $\Omega_F$ is unique for each $F$.
 
 Now, if $\Omega_F$ is not unique, there are two forms whose poles and iterated residues at the boundary vertices coincide. Their difference is then a form $\Omega'_F$ with simple poles along the boundary of $F$ that vanishes in all boundary points.  
 But then the iterated residues of $\Omega'_F$ along one-dimensional faces also vanish. Inductively, the residue of $\Omega'_F$ along its simple codimension $1$ boundary components must vanish while it has simple poles along this boundary, so $\Omega'_F$ itself vanishes and $\Omega_F$ is unique.

We then get that the iterated residue of
 $\Omega_S$ along each face is the unique form with simple poles along the codimension one boundary of the face and iterated residues $\pm 1$ at its boundary vertices. Thus, the lemma follows. 
 \end{proof}
 
 As in \cite[Lemma 2.16]{kohn2021adjoints} there is a unique canonical form on a finite collection of disjoint intervals on the real line. The following lemma provides a criterion for a space of rational one-forms to contain this canonical form.
 \begin{lemma}\label{residuesonsegments}
     Consider an $n$-dimensional space of rational one-forms on $\mathbb{P}^1$, each with $2n-2$ zeros and with simple poles in $n$ ordered pairs of real points $(p_i,q_i), i=1..n$ so that $p_1<q_1<\ldots<p_n<q_n$ with respect to the chosen orientation. If the forms in this space have no common zeros, then there is a unique form with residue $1$ at each $p_i$ and $-1$ at each $q_i$. In particular, this form is the canonical form of the disjoint union of intervals $(p_i,q_i), i=1,\ldots,n$.
 \end{lemma}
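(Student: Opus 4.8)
The plan is to reduce the statement to a linear-algebra argument about the map that sends a rational one-form to its tuple of residues at the $p_i$ and $q_i$. First I would fix coordinates on $\mathbb{P}^1$ and write an arbitrary form $\eta$ in the given $n$-dimensional space $W$ as $\eta = \frac{g(t)}{\prod_{i=1}^n (t-p_i)(t-q_i)}\,dt$, where $g$ is a polynomial of degree at most $2n-2$ (the condition ``$2n-2$ zeros'' together with the pole divisor of degree $2n$ is exactly the statement that $\eta$ is a global section of $\mathcal{O}(K_{\mathbb{P}^1}+D)$ with $D=\sum (p_i)+(q_i)$, which has dimension $2n-1$; our $W$ is an $n$-dimensional subspace of this). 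The residue of $\eta$ at $p_i$ is $g(p_i)/\prod_{j}(p_i-p_j)\prod_{j\neq i\text{ or all }j}(p_i-q_j)$, a nonzero multiple of $g(p_i)$, and similarly at $q_i$. So the residue vector lies in the coordinate subspace cut out by the $2n$ evaluation functionals, and the sum-of-residues relation $\sum_i(\mathrm{Res}_{p_i}\eta + \mathrm{Res}_{q_i}\eta)=0$ holds automatically. Thus the residues at $p_1,q_1,\dots,p_{n-1},q_{n-1},p_n$ determine the one at $q_n$, and I get a linear \emph{residue map} $\rho\colon W \to \mathbb{R}^{2n-1}$ landing in the hyperplane where the alternating sum is zero.

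The key step is to show $\rho$ is injective. Its kernel consists of forms in $W$ all of whose residues vanish; such a form is holomorphic on all of $\mathbb{P}^1$, hence zero. Therefore $\dim \rho(W) = n$. Now I would use the no-common-zeros hypothesis to pin down the image precisely. A point $(r_1,s_1,\dots,r_n,s_n)$ with $\sum(r_i+s_i)=0$ is in $\rho(W)$ iff the unique global section of $\mathcal{O}(K+D)$ with those residues lies in $W$; the obstruction to this is governed by how $W$ sits inside the $(2n-1)$-dimensional full space. I would argue instead more directly: the form with residue $1$ at each $p_i$ and $-1$ at each $q_i$ is the explicit form $\omega_0 = \sum_{i=1}^n\left(\frac{dt}{t-p_i}-\frac{dt}{t-q_i}\right)$ (up to the standard check that this has the stated residues and the correct zero count, $2n-2$, by degree), and this $\omega_0$ is known to be the canonical form of the disjoint union of intervals, exactly as in \cite[Lemma 2.16]{kohn2021adjoints}. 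The real content is that $\omega_0 \in W$. Suppose not; then $\rho(W)$ is an $n$-dimensional subspace of the $(2n-1)$-dimensional residue hyperplane $H_0=\{\sum(r_i+s_i)=0\}$ that misses the specific point $\rho(\omega_0)=(1,-1,\dots,1,-1)$.

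Here is where I would exploit the ordering and the no-common-zeros condition to derive a contradiction. Consider the family of forms $\eta_\lambda$ obtained by scaling a basis of $W$; on the interval $(p_i,q_i)$ a form $\eta \in W$ has real residues at the endpoints, and a sign-change / intermediate-value argument across consecutive intervals, combined with the interlacing $p_1<q_1<\cdots<p_n<q_n$, forces that if no single form in $W$ realizes the residue pattern $(+1,-1,\dots)$ then every form in $W$ must share a zero located between some $q_i$ and $p_{i+1}$ — contradicting the hypothesis that the forms in $W$ have no common zero. More precisely, I expect the clean route is: the evaluation-at-interior-points argument shows the $n$ functionals ``residue at $p_i$'' restricted to $W$ are linearly independent (again using no common zeros, so $g$ cannot vanish at all $p_i$ unless $\eta=0$ after accounting for degree), hence $W \xrightarrow{\ (\mathrm{Res}_{p_1},\dots,\mathrm{Res}_{p_n})\ } \mathbb{R}^n$ is an isomorphism, and then the remaining residues at the $q_i$ are determined; one checks the unique preimage of $(1,\dots,1)$ has residue $-1$ at each $q_i$ by the interlacing sign pattern (each interval contributes, via the residue theorem applied to sub-collections, the forced sign). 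I expect \textbf{this last sign-bookkeeping step to be the main obstacle}: verifying that injectivity of the residue-at-$p_i$ map is not merely generic but forced by the no-common-zeros hypothesis, and that the resulting unique form indeed has the alternating $\pm 1$ pattern rather than some other sign vector, which is precisely where the real ordering $p_1<q_1<\cdots<p_n<q_n$ enters and cannot be dropped. The final sentence, identifying this form as the canonical form of $\bigsqcup_i (p_i,q_i)$, then follows immediately from \cite[Lemma 2.16]{kohn2021adjoints}.
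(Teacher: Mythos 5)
Your setup (residue map, injectivity, hence uniqueness, and the explicit target form $\omega_0=\sum_i\bigl(\frac{dt}{t-p_i}-\frac{dt}{t-q_i}\bigr)$) is fine, but the existence step --- that $\omega_0$ actually lies in the given space $W$ --- is the entire content of the lemma, and your proposal leaves it unproved. Neither of your suggested routes closes it: the claim that failure of existence would force all forms in $W$ to share a zero between some $q_i$ and $p_{i+1}$ is never argued, and injectivity of $(\mathrm{Res}_{p_1},\dots,\mathrm{Res}_{p_n})\vert_W$ does not follow from the no-common-zeros hypothesis --- $W$ may contain a form whose poles are only at the $q_i$ (its numerator vanishes at every $p_i$), and this creates no base point because the other members of $W$ need not vanish there. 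Worse, with the hypotheses exactly as you use them (an arbitrary base-point-free $n$-dimensional subspace $W$ of the $(2n-1)$-dimensional space $H^0(\mathbb{P}^1,K+D)$), existence can genuinely fail: already for $n=2$ the pencils $W\subset V$ are parametrized by $\mathbb{P}(V^\ast)\cong\mathbb{P}^2$, those with a base point form the conic $\{[\mathrm{ev}_x]:x\in\mathbb{P}^1\}$, and those containing $\omega_0$ form a line, so a generic pencil is base-point-free yet misses $\omega_0$. Hence no amount of sign bookkeeping or intermediate-value reasoning from only ``ordered real poles'' and ``no common zeros'' can complete your argument; some additional structural input on $W$ is required.

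This is precisely where the paper's proof takes a different (and shorter) route: it writes every member of the space as $\sum_{i=1}^n c_{s,i}\,dt/\bigl((t-a_i)(b_i-t)\bigr)$, i.e.\ it works with the pairwise residue relation $\mathrm{Res}_{p_i}\omega=-\mathrm{Res}_{q_i}\omega$ for every form in the space. Granted that decomposition, the linear map $s\mapsto(c_{s,1},\dots,c_{s,n})$ is injective between $n$-dimensional spaces, hence an isomorphism, and the unique $s_0$ with $c_{s_0,i}=1$ for all $i$ gives both existence and uniqueness at once; the no-common-zeros hypothesis is only invoked to note that a general member has all $c_{s,i}\neq 0$. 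So to repair your write-up you must either establish this pairwise residue structure for the space at hand (as it holds in the situations where the lemma is applied in Example \ref{ex:manyadj}) or add it as a hypothesis; your draft correctly flags this step as ``the main obstacle'' but does not overcome it, and as argued above it cannot be overcome from the stated hypotheses alone by the strategy you outline.
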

 \begin{proof} We follow \cite[Lemma 2.16]{kohn2021adjoints}. In a parameter $t$ on $\mathbb{A}^1\subset \mathbb{P}^1$ we may assume that the points have coordinates $a_1<b_1<a_2<b_2<..<b_n$, and the $1$-forms are $$\Omega_s=\frac{f_s(t)}{(t-a_1)\cdot(b_1-t)\cdot ...\cdot (t-a_n)\cdot (b_n-t)}dt,$$ where for each $s$ in an $n$-dimensional vector space, $f_s$ is a polynomial of degree $2n-2$. The form $\Omega_s$ may be decomposed as a sum of $n$ forms by the partial fraction decomposition:
 $$\Omega_s=\sum_{i=1}^n\frac{c_{s,i}}{(t-a_i)\cdot(b_i-t)}dt.$$ This form has residues $c_{s,i}$ at $p_i$ and $-c_{s,i}$ at $q_i$, for $i=1,...,n$. 
 The forms $\Omega_s$ have no common zeros, so for a general $s$ the scalars $c_{s,i}$ are all nonzero.  
 The vector space of forms $\Omega_s$, is $n$-dimensional so there is a unique $s=s_0$, such that 
 $c_{s_0,i}=1$ for all $i$. 
 \end{proof}
\begin{remark} If the space of one-forms in the lemma has a common zero, it may not contain the canonical form.
\end{remark}
 
With these two lemmas we are ready to give examples of positive hexahedra $S$ with a unique canonical form $\Omega_S$ such that $(\mathrm Gr(2,4),S)$ is a positive geometry. 

\begin{example} \label{ex:54}
 In this example we consider an octahedron $\mathcal{P} \subset \mathbb{P}^5$ whose two additional facets are given by two hyperplanes defined over $\mathbb{Q}$:

    \begin{gather*}
            h_1(\mathbf{p}) = p_{12} - 3p_{13} - p_{23} + 8p_{14} + 2p_{24}+2p_{34} ,\\
    h_2(\mathbf{p}) = 2p_{12} + p_{13} + 5p_{23}-p_{14}-3p_{24}+p_{34}.
    \end{gather*}
    
    The residual arrangement $\mathcal{R}(S)$ of the corresponding positive hexahedron $S$ in $\mathrm{Gr}(2,4)$ is now given by six lines and a quadric surface:
    \begin{gather*}
        Q:=\{p_{12}p_{34}-p_{13}p_{24}+p_{14}p_{23} = h_1(\mathbf{p}) = h_2(\mathbf{p}) = 0\},\\
        L_1:=\{p_{12} = p_{13} = p_{23} = h_1(\mathbf{p}) = 0\},\quad L_2:=\{p_{13} = p_{23} = p_{34} = h_1(\mathbf{p}) = 0\},\\
        L_3:=\{p_{12} = p_{14} = p_{24} = h_2(\mathbf{p}) = 0\},\quad L_4:=\{p_{14} = p_{24} = p_{34} = h_2(\mathbf{p}) = 0\},\\
        L_5:=\{p_{12} = p_{13} = p_{23} = p_{34} = 0\},\quad
    L_6:=\{p_{12} = p_{14} = p_{24} = p_{34} = 0\}.
    \end{gather*}

\begin{figure}
    \centering
    \scalebox{0.7}{\begin{tikzpicture}[every node/.style={circle,draw,minimum size=8mm}]
  \node (Q) at (0,0) {$Q$};
  \node (L1) at (3,3) {$L_1$};
  \node (L2) at (3,1) {$L_2$};
  \node (L3) at (3,-1) {$L_3$};
  \node (L4) at (3,-3) {$L_4$};
  \node (L5) at (6,2) {$L_5$};
  \node (L6) at (6,-2) {$L_6$};

  \draw (Q) -- (L1);
  \draw (Q) -- (L2);
  \draw (Q) -- (L3);
  \draw (Q) -- (L4);

  \draw (L1) -- (L2);
  \draw (L1) -- (L5);
  \draw (L2) -- (L5);
  \draw (L3) -- (L4);
  \draw (L3) -- (L6);
  \draw (L4) -- (L6);
\end{tikzpicture}
}
    \caption{Intersections between the components of the residual arrangement in Example \ref{ex:54}}
    \label{fig:ex54}
\end{figure}

The incidence relations between the components of this residual arrangement are depicted in Figure \ref{fig:ex54}. There is a unique adjoint given by the polynomial $a_S=2p_{12}^2+p_{12}p_{13}+16p_{12}p_{14}+5p_{13}p_{14}+5p_{12}p_{23}+4p_{12}p_{24}+32p_{13}p_{24}+7p_{23}p_{24}-34p_{12}p_{34}+2p_{13}p_{34}+8p_{14}p_{34}+10p_{23}p_{34}+2p_{24}p_{34}+2p_{34}^2
$. 

The residual arrangement of the associated polytope $\mathcal{P}$  consists of four planes 
\begin{gather*}
\{p_{13} = p_{23} = h_1(\mathbf{p}) = 0\},\quad \{p_{14}=p_{24}=h_2(\mathbf{p})=0\},\\
\{p_{13} = h_1(\mathbf{p})=h_2(\mathbf{p})=0\},\quad \{p_{24}=h_1(\mathbf{p}) = h_2(\mathbf{p})=0\}.
\end{gather*}
and three lines
$$\{p_{12} = p_{13} = p_{23} = p_{34} = 0\}, \quad \{p_{12} = p_{14} = p_{23} = p_{34} = 0\},\quad \{p_{12} = p_{14} = p_{24} = p_{34} = 0\}.$$

The unique adjoint of $\mathcal{P}$ is given by $2p_{12}^2+p_{12}p_{13}+5p_{12}p_{23}+16p_{12}p_{14}+5p_{13}p_{14}+39p_{23}p_{14}+4p_{12}p_{24}+7p_{23}p_{24}+5p_{12}p_{34}+2p_{13}p_{34}+10p_{23}p_{34}+8p_{14}p_{34}+2p_{24}p_{34}+2p_{34}^2$. Modulo the adjoint of $S$ this is the polynomial $39p_{23}p_{14} - 32p_{13}p_{24} + 39p_{12}p_{34}$. Thus, modulo $\langle p_{12}p_{34}, p_{13}p_{24}, p_{14}p_{23}\rangle$ there is a unique adjoint of $S$ which is the same as the adjoint of $\mathcal{P}$. Note that $\mathcal{R}(\mathcal{P})$ contains all the lines of $\mathcal{R}(S)$ but does not contain the quadric surface. 

\paragraph{A positive geometry.}

Let us also see how the positive hexahedron $S$ in this  example is a four-dimensional positive geometry. 

We check the iterated residues at the boundary vertices along each one-dimensional face, and conclude by Lemma \ref{residueinvertices}. In what follows, we write $e_{ij}$ for the point in $\mathbb{P}^5$ whose vector of homogeneous coordinates satisfies $p_{ij}=1$ and $p_{kl} = 0$ for $(k,l)\neq (i,j)$. 

There are ten boundary vertices, the coordinate points $e_{12}$ and $e_{34}$ and the eight points 
$$\{h_1\cdot h_2=p_{12}=p_{13}=p_{14}=p_{34}=0\},\{h_1\cdot h_2=p_{12}=p_{24}=p_{23}=p_{34}=0\},$$
$$\{h_1=p_{14}=p_{23}=p_{24}=p_{12}p_{34}=0\},\{ h_2=p_{14}=p_{23}=p_{13}=p_{12}p_{34}=0\},$$
Each vertex is an endpoint of four one-dimensional boundary faces or boundary segments.  There are sixteen boundary line segments and four boundary segments in conics, one segment in each curve. So the algebraic closure of each boundary segment is a line or a conic that contains a pair of boundary points.
The iterated residue $\Omega_F$ of  $$\Omega_S=\frac{a_S}{p_{12}\cdot p_{23}\cdot p_{34}\cdot p_{14}\cdot h_1\cdot h_2}d\omega_S$$
along the face $F$, when $F$ is a boundary segment in a line or conic $\overline F$, has the form $$\Omega_F=\frac{a_{S,F}}{ B_1\cdots B_{r(F)}}d\omega_F,$$ where the numerator $a_{S,F}$ is the restriction of $a_S$ to $\overline F$, and $B_1,..,B_{r(F)}$ are the restrictions to $\overline F$ of those of the linear forms $p_{12},...,h_2$ that do not vanish on all of $F$.  Now the zeros of $a_{S,F}$ coincide with the zeros of the $B_i$ that are not boundary points of the segment $F$. So after reduction,   $\Omega_F$ has no zeros on $\overline F$ and  simple poles at the endpoints.  In particular, its residue has the same absolute value at the two endpoints.
Therefore we may scale $$\Omega_S=\frac{a_S}{p_{12}\cdot p_{23}\cdot p_{34}\cdot p_{14}\cdot h_1\cdot h_2}d\omega_S,$$
such that it is a canonical form for $(\mathrm{Gr}(2,4), S)$  as a positive geometry. 
\end{example}

\begin{example} \label{ex:schub}
    In this example we switch some of the signs in the forms $h_1$ and $h_2$ of the previous example. We consider an octahedron $\mathcal{P} \subset \mathbb{P}^5$ whose additional two facets are given by two Schubert hyperplanes defined over $\mathbb{Q}$ by the forms:
    \begin{gather*}
         h_1(\mathbf{p}) = p_{12} - 3p_{13} - p_{23} + 8p_{14} + 2p_{24}+2p_{34},\\
    h_2(\mathbf{p}) = 2p_{12} + p_{13} - 5p_{23}+p_{14}-3p_{24}+p_{34}.
    \end{gather*}
     As before $\mathrm{Gr}(2,4)$ is defined in Pl\"ucker coordinates by $\{qq:=p_{12}p_{34}-p_{13}p_{24}+p_{14}p_{23}=0\}$.
    The residual arrangement of the positive hexagon $S = \mathrm{Gr}(2,4)\cap \mathcal{P}$ is given by eight lines, a conic and a point. The lines and the conic are defined by the equations
    \begin{gather*}
   C_1:=\{ p_{23} = h_{1}(\mathbf{p}) = h_{2}(\mathbf{p}) = p_{12}p_{34}-p_{13}p_{24} = 0\},\\
        L_1:=\{p_{12} = p_{13} = p_{23} = h_{1}(\mathbf{p}) = 0\},\quad L_2:=\{p_{13} = p_{23} = p_{34} = h_{1}(\mathbf{p}) = 0\},\\
        L_3:=\{p_{12} = p_{13} = p_{14} = h_{1}(\mathbf{p}) = 0\}, \quad L_4:=\{p_{13} = p_{14} = p_{34} = h_{1}(\mathbf{p}) = 0\},\\
         L_5:=\{p_{23} = p_{24} = p_{34} = h_{2}(\mathbf{p}) = 0\}, \quad L_6:=\{p_{12} = p_{23} = p_{24} = h_{2}(\mathbf{p}) = 0\},\\
        L_7:=\{p_{12} = p_{14} = p_{24} = p_{34} = 0\}, \quad L_8:=\{p_{12} = p_{13} = p_{14} = p_{34} = 0\}.
    \end{gather*}
    The two points in $\{qq=p_{12}=p_{34}=h_1=h_2=0\}$, have nonnegative coordinates, except $p_{14}$ and $p_{24}$ that are connected by the equation
$11p_{14}^2-46p_{14}p_{24}-13p_{24}^2=0$. Projectively this equation has two solutions, in one of which $p_{14}$ and $p_{24}$ have different signs. So one of these points belongs to $S$, while the other one is a point in the residual arrangement. The incidence relations between the one-dimensional components of this residual arrangement are depicted in Figure \ref{fig:ex55}. The residual point is disjoint from all one-dimensional components.

\begin{figure}
    \centering
    \scalebox{0.7}{
    \begin{tikzpicture}[every node/.style={circle,draw,minimum size=8mm}]
  \node (C) at (0,0) {$C$};

  \node (L1) at (3,3) {$L_1$};
  \node (L2) at (3,1) {$L_2$};
  \node (L5) at (3,-1) {$L_5$};
  \node (L6) at (3,-3) {$L_6$};

  \node (L3) at (6,3) {$L_3$};
  \node (L4) at (6,1) {$L_4$};
  \node (L8) at (6,-1) {$L_8$};
  \node (L7) at (6,-3) {$L_7$};

  \draw (C) -- (L1);
  \draw (C) -- (L2);
  \draw (C) -- (L5);
  \draw (C) -- (L6);

  \draw (L1) -- (L2);
  \draw (L5) -- (L6);

  \draw (L1) -- (L3);
  \draw (L2) -- (L4);
  \draw (L3) -- (L4);
  \draw (L4) -- (L8);
  \draw (L8) -- (L7);
\end{tikzpicture}}
    \caption{Intersections between the components of the residual arrangement in Example \ref{ex:schub}}
    \label{fig:ex55}
\end{figure}
    
     By analyzing the intersections of  these components, one can see that, except for the isolated point,  the real part of the residual arrangement is connected. Furthermore, there is a unique adjoint $a_S$ to $S$, modulo the Pl\"ucker quadric defining $\mathrm{Gr}(2,4)$.

    The residual arrangement of the associated polytope $\mathcal{P}$ consists of a $\mathbb{P}^3$ and two $\mathbb{P}^2$'s:
    \begin{gather*}
        \{p_{13} = h_1(\mathbf{p}) = 0\},\\
    \{p_{12} = p_{14} = p_{34} = 0\},\\
    \{p_{23} = p_{24} = h_2(\mathbf{p}) = 0\}.
    \end{gather*}
    It has a unique adjoint given by $2p_{12}^2+p_{12}p_{13}+17p_{12}p_{14}+8p_{13}p_{14}+8p_{14}^2-2p_{12}p_{23}-p_{14}p_{23}+4p_{12}p_{24}+2p_{14}p_{24}+5p_{12}p_{34}+2p_{13}p_{34}+10p_{14}p_{34}-p_{23}p_{34}+2p_{24}p_{34}+2p_{34}^2$. 

   The adjoint $a_S$ to $S$ is not the restriction of this quadric to $\mathrm{Gr}(2,4)$, as is easily verified in computations. 
    We may also see this when analyzing how the residual arrangements $\mathcal{R}(S)$ and $\mathcal{R}(\mathcal{P})$ are related to each other. Each of the two $\mathbb{P}^2$'s in $\mathcal{R}(\mathcal{P})$ intersects $\mathrm{Gr}(2,4)$ in a pair of lines. These two pairs of lines appear in $\mathcal{R}(S)$. The remaining four lines of $\mathcal{R}(S)$ are contained in the $\mathbb{P}^3$ defined by $p_{13}=h_1(\mathbf{p})=0$. The two-dimensional intersection of this~$\mathbb{P}^3$ with the Grassmannian cannot appear in the residual arrangement of the region, since it is contained entirely in the ``redundant'' hyperplane $p_{13}=0$, which does not give a facet of~$S$. However, the four lines obtained by further intersecting with either $p_{12} =0$ or $p_{34}=0$ (we get a pair of lines for each of the two intersections) are exactly the four remaining lines of $\mathcal{R}(S)$.
    Now, the conic in $\mathcal{R}(S)$ does not come from $\mathcal{R}(\mathcal{P})$. The $\mathbb{P}^2$ defined by $h_1(\mathbf{p}) = h_2(\mathbf{p}) = p_{23} = 0$ contains a face of $\mathcal{P}$. However, this face is disjoint from $\mathrm{Gr}_{\geq 0}(2,4)$. So the intersection of this $\mathbb{P}^2$ with $\mathrm{Gr}(2,4)$ is entirely in the residual arrangement. This is precisely the conic $p_{13} = h_1(\mathbf{p}) = h_2(\mathbf{p}) = p_{12}p_{34} - p_{13}p_{24}= 0$.
This conic is not contained in the quadric adjoint of $\mathcal{P}$, so therefore the adjoint of $S$ is not the restriction of this quadric. 

\paragraph{A positive geometry.}
Let us see how this positive hexahedron $S$ is a positive geometry.
We list fourteen boundary vertices.  The first three are the coordinate points $$e_{12},e_{14},e_{34}.$$
Next, we find boundary vertices on two-dimensional faces of $S$ in quadric surfaces. There are three points on the quadric surface $\{qq=h_1=h_2=0\}$ of intersection between the conic plane sections $\{p_{12}p_{14}p_{34}=0\}$. There are four points on the quadric surface $\{qq=p_{12}=p_{34}=0\}$, the coordinate point $e_{14}$ and three points of intersection between the conic plane sections $\{p_{23}h_1h_2=0\}$.
Furthermore, there are six points on the quadric surface $\{qq=p_{14}=p_{23}=0\}$, the coordinate points $e_{12}, e_{34}$ and two points  on  each of the conic plane sections $\{h_1h_2=0\}$, so that the four plane sections $\{p_{12}h_1p_{34}h_2=0\}$ meet cyclically in boundary vertices. Finally two boundary vertices are the endpoints of a boundary segment on the line $\{p_{13}=p_{14}=p_{24}=h_2\}$ defined by $\{p_{12}p_{34}=0\}$.

Altogether there are seven boundary segments on conics, and $21$ boundary segments on lines, with one segment on each curve.  Each boundary vertex is a boundary point of four segments. The one-skeleton of $S$ is connected, so as in the previous example, the  
iterated residue of the form
$$\Omega_S=\frac{a_S}{p_{12}\cdot p_{23}\cdot p_{34}\cdot p_{14}\cdot h_1\cdot h_2}d\omega_S$$ along each boundary segment has no zeros and two simple poles.  By the connectedness of the one-skeleton one may scale $a_S$ so that the iterated residues of  $\Omega_S$ at the boundary vertices are $\pm 1$.  By Lemma \ref{residueinvertices}, $(\mathrm{Gr}(2,4),S)$  is a positive geometry with canonical from $\Omega_S.$
\end{example}

\begin{example} \label{ex:manyadj}
Our final example of a positive hexahedron has a three-dimensional space of adjoints, but is still a positive geometry.  In this example there are distinct two-dimensional faces with the same algebraic closure, in fact a quadric surface containing two disjoint two-dimensional faces of the hexahedron.  In this case the residual arrangement of one face contains boundary vertices of the other.  In particular, the residual arrangement of one face is not contained in the residual arrangement of the hexahedron.

The two additional facets of the positive hexahedron $S$ are defined by the linear forms
    \begin{gather*}
    h_1(\mathbf{p}) = -3p_{12} + 4p_{13}-p_{23}-p_{14}+3p_{24}-2p_{34},\\
    h_2(\mathbf{p}) = 4p_{12}-6p_{13}+ 3p_{23}+ 2p_{14}-5p_{24} + 2p_{34}.
    \end{gather*}
    inside the positive Grassmannian. 
    The residual arrangement of the associated polytope $\mathcal{P}$ consists of two $\mathbb{P}^3$'s and a line:
    \begin{gather*}
    \{p_{12} = p_{34} = h_1(\mathbf{p}) = h_2(\mathbf{p})= 0\},\\
        \{p_{13} = p_{24} = 0\},\quad \{p_{14} = p_{23} = 0\}.
    \end{gather*}
    
    The residual arrangement $\mathcal{R}(S)$ consists of a quadric surface, four lines and two points: 
    \begin{gather*}
        Q:=\{p_{14} = p_{23} = qq = 0\},\\
        L_1:=\{p_{12} = p_{13} = p_{23} = p_{24} = 0\},\\
        L_2:=\{p_{13} = p_{23} = p_{24} = p_{34} = 0\},\\
       L_3:=\{ p_{12} = p_{13} = p_{14} = p_{24} = 0\},\\ 
        L_4:=\{p_{13} = p_{14} = p_{24} = p_{34} = 0\},\\
        \{p_{12} = p_{34} = h_1(\mathbf{p}) = h_2(\mathbf{p}) = qq= 0\}.
    \end{gather*}
\begin{figure}
    \centering
  \scalebox{0.7}{\begin{tikzpicture}[every node/.style={circle,draw,minimum size=8mm}]
  \node (Q) at (0,0) {$Q$};

  \node (L1) at (-1.5,1.5) {$L_1$};
  \node (L3) at (-1.5,-1.5) {$L_3$};

  \node (L2) at (1.5,1.5) {$L_2$};
  \node (L4) at (1.5,-1.5) {$L_4$};

  \draw (Q) -- (L1);
  \draw (Q) -- (L2);
  \draw (Q) -- (L3);
  \draw (Q) -- (L4);

  \draw (L1) -- (L2);
  \draw (L2) -- (L4);
  \draw (L4) -- (L3);
  \draw (L3) -- (L1);
\end{tikzpicture}}
    \caption{Intersections between the components of the residual arrangement in Example \ref{ex:manyadj}}
    \label{fig:ex56}
\end{figure}
    
    The two points are disjoint from the higher-dimensional components, whose incidence relations are in turn depicted in Figure \ref{fig:ex56}.
    The space of quadrics interpolating the residual arrangement $\mathcal{R}(S)$ of $S$ is spanned by 
    \begin{gather*}
    2p_{13}p_{24} + 4p_{23}p_{24} + p_{14}p_{24} - 2p_{12}p_{34},\ 2p_{23}p_{14} + 4p_{23}p_{24} + p_{14}p_{24},\\ 4p_{13}p_{14} - 4p_{23}p_{24} + p_{14}p_{24},\ 12p_{13}p_{23} + 12p_{23}p_{24} + p_{14}p_{24}.
    \end{gather*}
    Modulo the Pl\"ucker quadric, the space of adjoints of $S$ is three-dimensional.

\paragraph{A positive geometry.} Since $S$ does not have a unique adjoint, to show that $(\mathrm{Gr}(2,4),S)$ is a positive geometry we need to specify an adjoint $a_S$ giving rise to the canonical form 
$$\Omega_S=\frac{a_S}{p_{12}\cdot p_{14}\cdot p_{23}\cdot p_{34}\cdot h_1\cdot h_2}d\omega_S.$$
To do this, we consider one- and two-dimensional faces of $S$.

The Zariski closures of two-dimensional faces in $S$ are planes or quadric surfaces.  These are the eight planes in $\{p_{13}p_{24} = p_{23}p_{14} = p_{12}p_{34}=0\}$ and the ten quadric surfaces 
$$\{qq=h_1=h_2=0\},\{qq=p_{12}=p_{34}=0\}$$ 
$$\{qq=h_i=p_{12}=0\},\{qq=h_i=p_{23}=0\},\{qq=h_i=p_{34}=0\},\{qq=h_i=p_{14}=0\}, i=1,2 $$
The faces of $S$ in the planes are all triangular;  each plane contains a line in the residual arrangement $\mathcal{R}(S)$.  
The quadric surfaces have faces bounded by conic sections. We consider one of these surfaces. 

The quadric surface $Q_{p_{12},p_{34}}=\{qq=p_{12}=p_{34}=0\}$ contains six residual points, and the boundary curve is defined by the four hyperplanes $h_1\cdot h_2\cdot p_{23}\cdot p_{14}=0$. So the quadratic forms in $\{p_{13},p_{14},p_{23},p_{24}\}$ that vanish in these six residual points intersect the surface $Q_{p_{12},p_{34}}$ in a net of curves that are adjoint to the face in this surface. Furthermore the restriction map is an isomorphism between the net of adjoints to $S$ and this net of curves on the quadric surface.  On the two boundary conics defined by $h_1$ and $h_2$ there are two pairs of boundary vertices.
On both $\{h_1=0\}$ and $\{h_2=0\}$ they are the intersection points of these hyperplanes with the quadrangle $\{p_{12}=p_{34}=p_{14}p_{23}=p_{13}p_{24}=0\}$.
As in \cite[Example 2.13]{kohn2021adjoints} and Lemma \ref{residuesonsegments} there is a unique (up to sign) adjoint such that the iterated residue of the form $\Omega_S$ along the boundary conic both in $\{h_1=0\}$ and in $\{h_2=0\}$ has residues $\pm 1$ at the pairs of boundary vertices.  In particular  
$$\Omega_S=\frac{a_S}{p_{12}\cdot p_{14}\cdot p_{23}\cdot p_{34}\cdot h_1\cdot h_2}d\omega_S$$ for a unique adjoint $a_S$ in the net of adjoints of $S$.

Now we need to make sure that this form $\Omega_S$ has appropriate residues at all other boundary vertices.
For this we check the set of one-dimensional boundary faces or boundary segments and their endpoints, the boundary vertices of $S$.  
The boundary segments lie on lines and conics. There are two segments in each of the six conics
$$\{p_{12}=p_{34}=h_1=0\},\{p_{12}=p_{34}=h_2=0\}, $$
$$\{p_{12}=h_1=h_2=0\},\{p_{14}=h_1=h_2=0\},
\{p_{34}=h_1=h_2=0\},\{p_{23}=h_1=h_2=0\}. $$
The latter four intersect cyclically in a pair of boundary vertices. 
 There are $20$ boundary segments in lines, each line with a single boundary segment.
 These lines are the four lines of the quadrangle $\{p_{12}=p_{34}=p_{14}p_{23}=p_{13}p_{24}=0\}$ and the $16$ lines $\{h_1h_2=p_{12}p_{34}=p_{14}p_{23}=p_{13}p_{24}=0\}$.
 There are sixteen boundary vertices, eight on the first four lines, and the eight points $\{h_1=h_2=p_{12}p_{34}=p_{14}p_{23}=p_{13}p_{24}=0\}$, such that each of them is an endpoint of four boundary segments.

 Now, we have fixed $\Omega_S$ to have iterated residues $\pm 1$ at the eight boundary points on the two conics $C_1=\{p_{12}=p_{34}=h_1=0\},C_2=\{p_{12}=p_{34}=h_2=0\}$ in $\mathrm{Gr}(2,4)$. 
 Each of these eight boundary points is an endpoint of three boundary segments in the lines. One of these segments has one of the endpoints on $C_1$ and the other on $C_2$. So two segments at each of the eight boundary points have an endpoint among the eight points in $\{h_1=h_2=0\}$ outside $\{p_{12}=p_{34}=0\}$. These eight points are contained in the eight planes $\{p_{12}p_{34}=p_{14}p_{23}=p_{13}p_{24}=0\}$, one point per plane. These planes each contain a boundary triangle with three boundary vertices. The adjoints of $S$, including the special one $a_S$ all vanish on two lines in each of these planes. These lines are also in the algebraic boundary of $S$. The iterated residue of $\Omega_S$ along the face $F_\Pi$ in such a plane $\Pi$ therefore reduces to a form $\Omega_{F_\Pi}$ with no zeros on the plane, and simple poles along the triangle boundary of $F_\Pi$. Since $\Omega_S$ has iterated residues $\pm 1$ at two boundary points of $F_\Pi$, so does $\Omega_{F_\Pi}$ which therefore has residue $\pm 1$ also in the third boundary point. So the iterated residue of $\Omega_S$ is $\pm 1$ at every boundary point of $S$, and we may conclude, by Lemma \ref{residueinvertices} that $(\mathrm{Gr}(2,4),S)$ is a positive geometry with canonical form $\Omega_S$.

Finally, we describe a new phenomenon in this example. 
In the quadric surface $Q_{h_1,h_2}=\{qq=h_1=h_2=0\}$ there are two disjoint two-dimensional faces $F_1$ and $F_2$.  The face $F_1$ has two boundary conic curves with boundary vertices $\{qq=h_1=h_2=p_{12}=p_{14}=0\}$.  The face $F_2$ is bounded by the four conics $\{qq=h_1=h_2=p_{12}\cdot p_{23}\cdot p_{34}\cdot p_{14}=0\}.$
The face $F_1$ has no residual arrangement, while $F_2$ has a residual arrangement which is the four points in 
$Q_{h_1,h_2}\cap\mathcal{R}(S)$ and the two boundary points in $F_1$ that are not on $\mathcal{R}(S)$. 

The intersection $Q_{h_1,h_2}\cap\mathcal{R}(S)$ is therefore four points. There is a five-dimensional space of quadratic forms modulo $Q_{h_1,h_2}$ that contain these four points. Among them there is the three-dimensional space forms that are restrictions of adjoints to $S$ and
three-dimensional space of adjoints to $F_2$ that vanish on all six residual points in $Q_{h_1,h_2}$ to the face $F_2$. 

In the boundary of $F_2$ each of the two conics $\{qq=h_1=h_2= p_{23}\cdot p_{34}=0\}$ contain two segments of the boundary and hence four boundary vertices. 

 The iterated residue of $\Omega_S$ along $F:=F_1\cup F_2$ has the form $$\Omega_F=\frac{a_{S,F}}{p_{12}\cdot p_{14}\cdot p_{23}\cdot p_{34}}d\omega_F,$$
 where $a_{S,F}$ is the restriction of $a_S$ to $Q_{h_1,h_2}$.
Each of the conics in the boundary of $F$ contains two boundary segments and four boundary points. There are altogether eight boundary points, one in each of the boundary planes of $S$. Notice that we already saw that the iterated residue of $\Omega_S$ at each of these points is $\pm 1$.  

Now, consider the iterated residue of $\Omega_S$ on the two connected components $F_1$ and $F_2$ of $F$.  
 The boundary of $F_1$ is the union of two conics 
 $\{qq=h_1=h_2=p_{12}\cdot p_{14}=0\}$ and two boundary points $\{h_1=h_2=p_{12}= p_{14}=p_{13}\cdot p_{24}=0\}$.
 The iterated residue of $\Omega_S$ on $F_1$ is a form $\Omega_{F_1}$ with poles along the two conics, no zeros, and iterated residues $\pm 1$ at the two boundary points.
 In particular $\Omega_{F_1}$ is of the form
  $$\Omega_{F_1}=\frac{1}{p_{12}\cdot p_{14}}d\omega_F,$$
 as a form on $Q_{h_1,h_2}$. 
 The boundary of $F_2$ is a cycle of six conic segments, and the
 residue $\Omega_{F_2}$ of $\Omega_S$ along $F_2$ has the form $$\Omega_{F_2}=\frac{a_{S,F_2}}{p_{12}\cdot p_{14}\cdot p_{23}\cdot p_{34}}d\omega_F,$$
where $a_{S,F_2}$ is an adjoint of $F_2$ on $Q_{h_1,h_2}$. It vanishes at the four points of intersection $Q_{h_1,h_2}\cap\mathcal{R}(S)$ between the residual arrangement of $S$ and $Q_{h_1,h_2}$, and at the boundary vertices of $F_1$. The iterated residues of $\Omega_{F_2}$ at the boundary vertices of $F_2$ are $\pm 1$, while the residues of this form at the boundary vertices of $F_1$ vanish.  

Let $a_{S,F}$ be the restriction of $a_S$ to $Q_{h_1,h_2}$.  Since $p_{23}p_{34}$ vanishes at the boundary points of $F_2$ and not at the boundary points of $F_1$, the form $a_{S,F}$ decomposes, for a suitable nonzero scalar $\lambda$, into 
 $$a_{S,F}=\lambda p_{23}p_{34}+a'_{S,F_2},$$
 such that $a'_{S,F_2}$ vanishes also at the boundary points of $F_1$.  Hence $a'_{S,F_2}$
 is an adjoint of $F_2$ on $Q_{h_1,h_2}$. Furthermore $$\Omega_{F_1}=\frac{\lambda \cdot p_{23}\cdot p_{34}}{p_{12}\cdot p_{14}\cdot p_{23}\cdot p_{34}}d\omega_F=\frac{\lambda}{p_{12}\cdot p_{14}}d\omega_F,$$ and $\lambda=\pm 1$ since the residues are $\pm 1$ at the boundary vertices.  Similarly, the residue $\Omega_{F_2}$ of $\Omega_S$ is $$\Omega_{F_2}=\frac{a'_{S,F_2}}{p_{12}\cdot p_{14}\cdot p_{23}\cdot p_{34}}d\omega_F,$$
 so $a'_{S,F_2}=a_{S,F_2}.$
The sum  $$\Omega_F=\Omega_{F_1}+\Omega_{F_2}=\pm\frac{1}{p_{12}\cdot p_{14}}d\omega_F+\frac{a_{S,F}}{p_{12}\cdot p_{14}\cdot p_{23}\cdot p_{34}}d\omega_F$$
becomes the iterated residue of $\Omega_S$ along $F$. It has simple poles along the boundary segments and iterated residues $\pm 1$ at the boundary points.  

This explains how $(Q,F)$, the quadric surface $Q=\{qq=h_1=h_2=0\}$ with the boundary face $F$ of $S$, is a positive geometry with form $\Omega_F$.  
\end{example}

\section{Open questions and future directions} \label{sec:6}
In this section we briefly present some open questions stemming from the study of positive polytopes in the Grassmannian. 

\begin{question}
    Are positive hexahedra in $\mathrm{Gr}(2,4)$ positive geometries? While there may be many adjoints, the residue conditions at the vertices of the hexahedron might still pick a unique one as the numerator of the canonical function. This was the case in Example \ref{ex:manyadj}.
\end{question}

\begin{question}
    What can one say about the topology of positive polytopes in $\mathrm{Gr}(2,4)$? Are they connected? Are they contractible? This is the case for the examples in Section \ref{sec:5} but does this hold in general? 
\end{question}

\begin{question}
    We compute residual arrangements of positive polytopes by using cylindrical algebraic decomposition in \texttt{Mathematica}. This is a complicated procedure which prevents us from doing the computation for all $1680$ combinatorial types of positive hexahedra. Is there more efficient software (or a more efficient algorithm) to do this? If not, can one develop such software? 
\end{question}

\section*{Conflict of interest statement}
The authors report there are no competing interests to declare.

\bibliographystyle{plain}
\bibliography{bibliography}

\bigskip
\bigskip

\noindent
\footnotesize
{\bf Authors' addresses:}

\smallskip

\noindent Dmitrii Pavlov,
TU Dresden and MPI MiS Leipzig
\hfill \url{dmitrii.pavlov@mis.mpg.de}

\noindent Kristian Ranestad,
Universitetet i Oslo
\hfill \url{ranestad@math.uio.no}

\end{document}